\newcommand\thankssymb[1]{\textsuperscript{\@fnsymbol{#1}}}
\declaretheorem[name=Theorem,numberwithin=section]{thm}
\newtheorem{cor}[thm]{Corollary}
\newtheorem{prop}[thm]{Proposition}
\newtheorem{lem}[thm]{Lemma}
\newtheorem{conj}[thm]{Conjecture}
\newtheorem{quest}[thm]{Question}
\theoremstyle{definition}
\newtheorem{defn}[thm]{Definition}
\newtheorem{con}[thm]{Construction}
\newtheorem{exmp}[thm]{Example}
\DeclarePairedDelimiter{\ceil}{\lceil}{\rceil}
\theoremstyle{remark}
\newtheorem{rem}[thm]{Remark}
\newcommand{\Z}{\mathbb{Z}}
\newcommand{\R}{\mathbb{R}}
\let\c@equation\c@thm
\numberwithin{equation}{section}
\title{Bridge number and meridional rank of knotted surfaces}
\author{Jason Joseph}\thanks{The first author was supported by the Max Planck Institute for Mathematics and NSF grant DMS-1664567 during part of this research and is currently supported by NSF-RTG grant NSF DMS-1745670.
}
\author{Puttipong Pongtanapaisan}\thanks{Research conducted for this paper is supported by the Pacific Institute for the Mathematical Sciences (PIMS). The research and findings may not reflect those of the Institute.}
\begin{document}

\maketitle

\textbf{Abstract:} 
The Meridional Rank Conjecture asks whether the bridge number of a knot in $S^3$ is equal to the minimal number of meridians needed to generate the fundamental group of its complement. In this paper we investigate the analogous conjecture for knotted surfaces in $S^4$. Towards this end, we give a construction to produce classical knots with quotients sending meridians to elements of any finite order and which detect their meridional ranks. We establish the equality of bridge number and meridional rank for these knots and knotted spheres obtained from them by twist-spinning. On the other hand, we show that the meridional rank of knotted spheres is not additive under connected sum, so that either bridge number also collapses, or meridional rank is not equal to bridge number for knotted spheres. We also show a relationship between the bridge numbers of welded knots and ribbon tori using the Tube map, and give applications to bridge trisections of knotted surfaces.

\section{Introduction}

In the classical setting, the \textbf{bridge number} $\beta(K)$ is a fundamental measure of complexity for a knot $K$ in $S^3$. The bridge number provides a comprehensible exhaustion of all knots; indeed, 2-bridge knots are the simplest of knots in many ways, and their classification by Schubert was a triumph of early knot theory \cite{schubert}. Cappell and Shaneson's Meridional Rank Conjecture posits that $\beta(K)$ is equal to the \textbf{meridional rank} $\mu(K)$, the minimal number of meridians needed to generate $\pi_1(S^3\backslash K)$. Tools such as knot contact homology and Coxeter quotients have been used to verify that the conjecture holds for several families of knots (see \cite{baader2020coxeter}, and references therein) but no counterexamples have been discovered. In this paper, we study the bridge numbers and meridional ranks of knotted surfaces in $S^4$.

The bridge number of a knotted surface is completely analogous to the classical case: it is the minimal number of minima of the surface taken over all embeddings in $S^4$. However, unlike the classical case, not much is known about the bridge number of knotted surfaces. Scharlemann showed that a sphere in $S^4$ with 4 criticial points is standard \cite{scharlemann}, but it is conceivable that a nontrivial sphere could have a single minimum and three or more maxima. Such a sphere would have group $\Z$, so by work of Freedman it would be topologically unknotted \cite{freedman}. Hence it is not known if $\beta(K)=1$ implies that $K$ is the unknot.

\textit{Twist-spinning} is an operation introduced by Zeeman \cite{zeeman} which produces a knotted sphere $\tau^m K$ from a classical knot $K\subseteq S^3$ and an integer $m$, called the twist index. By construction, $\beta(\tau^m K)\leq \beta (K)$. Similarly, $\mu(\tau^m K)\leq \mu(K)$, because the group of a twist-spun knot is a quotient of the classical knot group. Note that $m=\pm 1$ always yields an unknotted sphere.

\begin{quest} \label{quest1}
Given $K\subseteq S^3$, does there exist $m\neq\pm 1$ such that $\mu(\tau^m K)<\mu(K)$, or $\beta(\tau^m K)<\beta(K)$?
\end{quest}

In Theorem \ref{thm:twistspunrank} we find conditions on $K$ and $m$ so that the equality of $\mu(K)$ and $\beta(K)$ ensure the equality of all four of these values. To do so, we need to find quotients of knot groups which are compatible with the quotient maps $\pi_1(S^3\setminus K) \rightarrow \pi_1(S^4\setminus \tau^m K)$. In the case that $m$ is even, Coxeter quotients are sufficient, and we make use of large families of examples for which the MRC is known, due to Baader, Blair, and Kjuchukova in \cite{baader2020coxeter}, and these authors and Misev in \cite{baader2020bridge}. We will refer to these examples as \textit{BBKM knots}.

For odd-twist spinning, we adapt the construction of Brunner \cite{brunner}, utilized in \cite{baader2020coxeter}, to find quotients of classical knot groups sending meridians to $p$-cycles of any finite order $p$. This may be of independent interest, as it
works in situations where Coxeter quotients, the Alexander module, and kei colorings all fail. When applicable, we will mention how these techniques allow us to compute the meridional rank for more general deform-spun knots. We summarize these results in the following theorem.

\begin{restatable}{thm}{twistspun}
\label{thm:twistspunrank}
Let $m,n\in \Z$ with $|m|\neq 1, n\geq 2$. There exist infinitely many classical knots $K\subseteq S^3$ such that $\mu(\tau^m K) = \beta(\tau^m K) = n$.
\end{restatable}

This answers, for these examples, a question of Meier and Zupan regarding the \textit{bridge trisection index} $b(S)$, which can be regarded as the analogue of trisection genus \cite{gay2016trisecting} in the world of knotted surfaces. Note that this is not a bridge number in the sense of counting minima, but is related to it: $3\beta(S)-\chi(S)\leq b(S)$. Meier and Zupan showed that $b(S)$ can be arbitrarily large, and achieves every positive integer congruent to 0 or 1 mod 3. We reprove a theorem of Sato and Tanaka \cite{sato2020bridge} that the case of $b\equiv 2$ mod 3 is also achieved, using meridional rank instead of quandle colorings. Other applications to bridge trisections are given in Section \ref{sec:bridgetrisections}. 

Twist-spun knots can be used to exhibit interesting behaviors. It is an open question whether the meridional rank is $(-1)$-additive under connected sum of classical knots, whereas Schubert proved that the bridge number is $(-1)$-additive for classical knots. On the other hand, both $\mu(S)$ and $\beta(S)$ fail to be $(-1)$-additive for connected sum of knotted surfaces. Although he was working in the context of abstract knot groups, Maeda proved in \cite{maeda1977composition} that there exist knotted surfaces $S_1$ and $S_2$ of genus one such that $\mu(S_1) = \mu(S_2)=2$ and $\mu(S_1\# S_2) =2$. For the bridge number, there is an example due to Viro in \cite{viro1973local} of a knotted sphere $F$ with $\beta(F)=2$, such that connected sum with a standard projective plane $\R P^2$ is again a standard projective plane, hence $\beta(F\# \R P^2) = \beta(\R P^2)=1$. The $({-1})$-additivity of bridge number appears to remain open in the case of orientable knotted surfaces. However, using examples first studied by Kanenobu \cite{kanenobu1996weak} we show that the meridional rank of a connected sum of spheres can achieve any value in between the theoretical limits, so that either the meridional rank conjecture fails for knotted spheres, or bridge number also fails to be $(-1)$-additive. 

 \begin{restatable}{thm}{nonadditivity}\label{thm:nonadditivity}
 Let $p_1,\dots,p_n,q\geq1$ such that $max\{ p_i\} \leq q \leq \sum p_i -(n-1)$. There exist 2-knots $K_1,\dots ,K_n$, with $\mu(K_i)=p_i$ for all $i$ and such that $\mu(K_1\#\cdots\# K_n)=q$.

 \end{restatable}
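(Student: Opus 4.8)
The plan is to trap $\mu(K_1\#\cdots\#K_n)$ between two bounds and then realize every value in the prescribed window by tuning an explicit construction. First I would record the easy inequalities. A connected sum of $2$-knots has fundamental group the free product of the summand groups amalgamated over the infinite cyclic subgroup $\langle t\rangle$ generated by a meridian, so concatenating meridional generating sets of the summands while identifying the shared meridian at each of the $n-1$ amalgamations yields a meridional generating set of size $\sum_i p_i-(n-1)$; hence $\mu\le\sum_i p_i-(n-1)$. For the lower bound, collapsing all summands except one induces a surjection $\pi_1=G_1*_{\langle t\rangle}\cdots*_{\langle t\rangle}G_n\twoheadrightarrow \pi_1(S^4\setminus K_{i_0})$ sending every meridian to a meridian (map each other factor to $\langle t\rangle$ via its abelianization and fix $G_{i_0}$); such a map cannot increase meridional rank, so $\mu\ge \mu(K_{i_0})=p_{i_0}$, and therefore $\mu\ge\max_i p_i$. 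It then remains to realize each intermediate value $q$.

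For the realization I would use $2$-knots equipped with symmetric- (equivalently Coxeter-) group quotients in which meridians are sent to transpositions (reflections), exactly as in the proof of Theorem \ref{thm:twistspunrank}. Concretely, I would arrange that each $K_i$ surjects onto $S_{p_i+1}$ with meridians mapping to transpositions acting on a point set $\{1,2\}\cup P_i$, where the distinguished meridian common to all summands maps to $(1\,2)$ and $|P_i|=p_i-1$. Since $S_{p_i+1}$ cannot be generated by fewer than $p_i$ transpositions this forces $\mu(K_i)\ge p_i$, while an explicit bridge presentation (a small meridional generating set) gives the matching $\mu(K_i)=p_i$. Because all the summand maps agree on $\langle t\rangle$, they extend over the amalgam to a homomorphism $\pi_1\to S_N$, whose image acts on $\{1,2\}\cup\bigcup_i P_i$ and is the full symmetric group on that set since the transpositions remain connected through $\{1,2\}$. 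Writing $D=\sum_i p_i-(n-1)-q$ for the desired defect, I would build the summands so that the point sets $P_i$ overlap exactly enough to make the combined set have $N=2+\sum_i(p_i-1)-D$ elements; then the image is $S_N$, which needs at least $N-1=q$ transpositions, giving $\mu\ge q$. As $q$ ranges over $[\max_i p_i,\ \sum_i p_i-(n-1)]$ the defect $D$ ranges over $[0,\ \sum_i(p_i-1)-(\max_i p_i-1)]$, and these overlaps are realizable for every such $D$, so the lower bound can be tuned to any target.

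The matching upper bound $\mu\le q$ is where the genuine work lies, and it is the step I expect to be the main obstacle. The quotient above only shows that the overlapped meridians coincide in a quotient; to actually save generators I must exhibit $q$ meridians generating the honest amalgam $\pi_1$. The mechanism, generalizing Maeda's collapse \cite{maeda1977composition}, is that all meridians of all factors are conjugate to the amalgamating meridian $t$, so a meridian $w$ of a later factor can be written $w=g\,v\,g^{-1}$ for a meridian $v$ already kept; the redundancy of $w$ reduces to arranging that the conjugator $g$ lies in the subgroup generated by the retained meridians. I would therefore construct the summands with enough meridional symmetry — for instance as twist-spun knots whose monodromy supplies the required conjugating elements, realized as genuine $2$-knots rather than merely as abstract groups — so that precisely the intended $D$ conjugations are internal to the kept generating set and no more occur. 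The crux is to make this collapse happen in the honest group (not only in the symmetric-group quotient) while the lower bound simultaneously forbids any further collapse, so that the two estimates are engineered to meet exactly at $q$.
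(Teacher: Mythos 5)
Your skeleton is right --- trap $\mu(K_1\#\cdots\#K_n)$ between a lower bound and the subadditive upper bound, then tune examples to hit every intermediate $q$ --- and your opening inequalities agree with Proposition \ref{prop:connectsum}. But the proposal stops exactly where the theorem lives. You write that the matching upper bound $\mu\le q$ ``is where the genuine work lies'' and that you ``would construct the summands with enough meridional symmetry,'' yet no such summands are constructed and no collapse is proved; what remains is a statement of desiderata, not a proof. The paper's actual mechanism is concrete: take $T_i=\tau^{m_i}T(2,m_i)$ with the $m_i$ pairwise coprime and odd. In the group of $\tau^{m}k$ the two bridge meridians satisfy $x^{m}=y^{m}$, so for a zig-zag amalgamated connected sum of such knots, B\'ezout ($aM+bm_n=1$ with $M=m_1\cdots m_{n-1}$) lets two meridians generate the whole group (Lemma \ref{lem:nonadditivity}); the value $q$ is then tuned by Kanenobu's bookkeeping with repeated summands $r_iT_i$ and repeated applications of subadditivity. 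That identity is the entire content of the collapse, and nothing in your write-up substitutes for it.

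There is also a structural tension in your lower-bound strategy that you do not address. You propose surjections onto symmetric groups sending meridians to transpositions, i.e.\ to involutions. But any summand that participates in a B\'ezout-style collapse must carry relations of the form $x^{m}=y^{m}$ with the various $m$ coprime, hence mostly odd; in a quotient where meridians are involutions, $x^{m}=x$ for odd $m$, so the relation forces the images of $x$ and $y$ to commute, and conjugate commuting transpositions that must remain conjugate in the image collapse the quotient. So the transposition quotients you want for the lower bound and the order relations you need for the upper bound fight each other on the same examples. The paper sidesteps this entirely by running the lower bound through the Alexander module: $A(\tau^{m}T(2,m))\cong\Z[t^{\pm1}]/(2,1+t+\cdots+t^{m-1})$ surjects onto a field, $A(K\#J)\cong A(K)\oplus A(J)$ is additive, and the Nakanishi index satisfies $m(K)\le\mu(K)-1$, which gives exactly the lower bound $q$ with no interference from the collapse relations (Lemma \ref{lem:module_lower_bound}). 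If you want to salvage your approach you would need either a different family of quotients compatible with the twist-spinning relations, or a different collapse mechanism --- and the latter is precisely the unproved step.
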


 
 \begin{restatable}{cor}{nonaddcorollary}
Either bridge number fails to be $(-1)$-additive on 2-knots, or there exist 2-knots $K$ with $\mu(K)<\beta(K)$.
 \end{restatable}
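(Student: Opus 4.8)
The plan is to argue by contradiction from the negation of the disjunction: suppose both that bridge number \emph{is} $(-1)$-additive on 2-knots and that \emph{no} 2-knot has $\mu(K)<\beta(K)$, and derive a contradiction using Theorem~\ref{thm:nonadditivity}. The first thing to pin down is the inequality that holds unconditionally: a bridge presentation realizing $\beta(K)$ minima hands us a generating set of the knot group consisting of $\beta(K)$ meridians, so $\mu(K)\leq\beta(K)$ for every 2-knot. Consequently the assumption ``no 2-knot has $\mu(K)<\beta(K)$'' is equivalent to the assumption that the Meridional Rank Conjecture holds for all 2-knots, i.e.\ $\mu(K)=\beta(K)$ identically; this reformulation is what lets the two hypotheses interact.

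Next I would feed Theorem~\ref{thm:nonadditivity} a choice of parameters for which the realizable value of $q$ sits \emph{strictly} below the upper endpoint $\sum p_i-(n-1)$, since it is precisely this slack that will break additivity. The cleanest instance is $n=2$ and $p_1=p_2=2$, where the admissibility range $\max\{p_i\}\leq q\leq\sum p_i-(n-1)$ reads $2\leq q\leq 3$; taking $q=2$ produces 2-knots $K_1,K_2$ with $\mu(K_1)=\mu(K_2)=2$ and $\mu(K_1\#K_2)=2$. This alone shows meridional rank is not $(-1)$-additive, because $(-1)$-additivity would force $\mu(K_1\#K_2)=\mu(K_1)+\mu(K_2)-1=3$.

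Finally I would close the loop against bridge number. Under the standing MRC assumption we have $\beta(K_i)=\mu(K_i)=2$ and $\beta(K_1\#K_2)=\mu(K_1\#K_2)=2$, whereas $(-1)$-additivity of $\beta$ demands $\beta(K_1\#K_2)=\beta(K_1)+\beta(K_2)-1=3$, a contradiction. Hence at least one of the two standing assumptions is false, which is exactly the asserted dichotomy. I do not anticipate a genuine obstacle here beyond Theorem~\ref{thm:nonadditivity} itself; the one point demanding care is the parameter selection, namely ensuring the chosen $q$ lies strictly inside the allowed interval so that the $(-1)$-additivity transported from $\beta$ to $\mu$ via MRC is actually violated rather than merely matched.
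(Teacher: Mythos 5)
Your proposal is correct and matches the paper's intended argument: the paper derives this corollary directly from Theorem~\ref{thm:nonadditivity} together with the universal inequality $\mu\leq\beta$, observing that any instance where $\mu$ fails to be $(-1)$-additive forces either $\beta$ to fail additivity or $\mu<\beta$ somewhere among the summands or their connected sum. Your explicit choice of parameters $n=2$, $p_1=p_2=2$, $q=2$ is a valid instantiation and the contradiction you derive is exactly the one the paper has in mind.
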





For a higher dimensional analogue of the Meridional Rank Conjecture for ribbon $T^2$-knots, we first represent them by virtual knot diagrams via Satoh's \textit{Tube map} correspondence \cite{satoh2000virtual,kauffman2021virtual}. The quantities $\beta(S), b(S),$ and $\mu(S)$ can then be conveniently calculated on these generalized knot diagrams, which look just like classical knot diagrams with a new type of crossing. We will show that if a ribbon $T^2$-knot $F$ can be represented as a diagram obtained from a BBKM knot by performing appropriate local operations, then the meridional rank equals the bridge number.

\begin{restatable}{thm}{virtualBBKM}\label{thm:virtualbbkm}
Consider a BBKM knot diagram $D$ of a knot $K$ with $\mu(K)=n.$ If $D'$ is the result of performing some of the operations below, then $\mu(Tube(D'))\geq n:$
\begin{enumerate}
\item Virtualizing up to $k-1$ classical crossings in a twist region made up of $k$ crossings that corresponds to either a single Coxeter generator or two Coxeter generators, where $k$ is even.
        \item Performing any number of flank-switch moves anywhere.
    \item For each flank move performed on a twist region in a rational tangle corresponding to two Coxeter generators, adding at least a classical crossing to balance out.
\end{enumerate}

 \end{restatable}
 \begin{figure}[ht!]
\labellist
\small\hair 2pt

\pinlabel \text{$a$} at 904 529
\pinlabel \text{$c$} at 236 29
\pinlabel \text{$b$} at 584 609
\pinlabel \text{$c$} at 284 690
\endlabellist
 \centering
    \includegraphics[width=.7\textwidth]{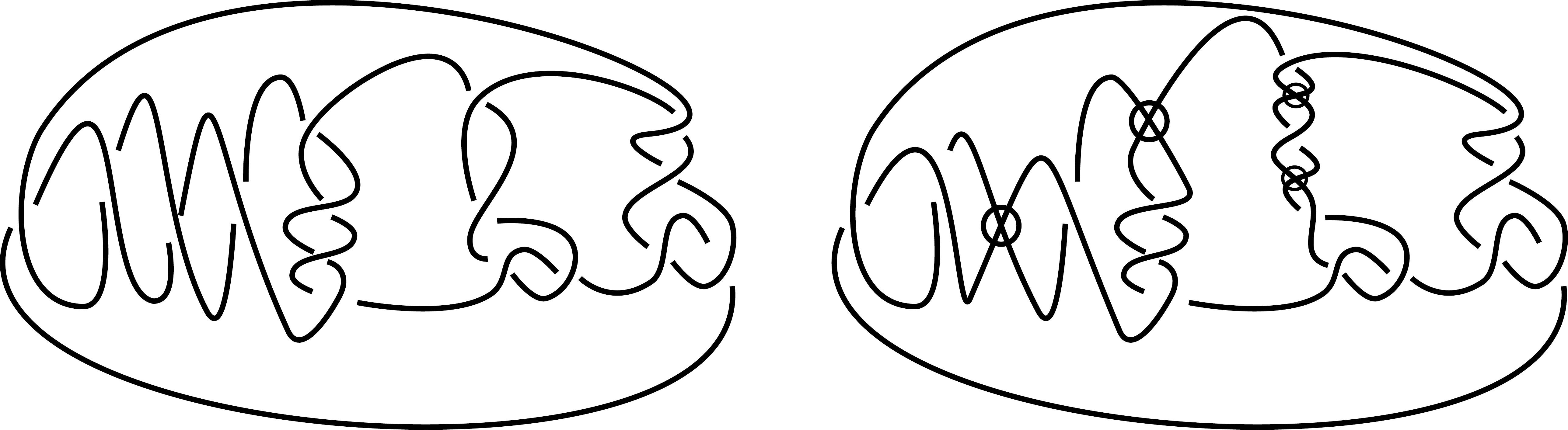}
    \caption{At left, a classical BBKM knot. At right, a virtual BBKM knot diagram, where the leftmost and the middle virtual crossings demonstrates situation (1), the rightmost two virtual crossings demonstrate situation (3) of Theorem \ref{thm:virtualbbkm}}
    \label{fig:modification}
\end{figure}

The \textbf{Wirtinger number} is a combinatorial upper bound for the bridge number of a classical knot, introduced by Blair et al. \cite{blair2020wirtinger}. In that work, they prove that the Wirtinger number is in fact equal to the bridge number. The second author's result on the Wirtinger number of virtual links \cite{pongtanapaisan2019wirtinger}, and Theorem \ref{thm:virtualbbkm} allows us to compute the exact values of the bridge numbers and meridional ranks of infinitely many ribbon $T^2$-knots.
 
 \begin{restatable}{cor}{VBBKM}
Let $K$ be a welded knot obtained from Theorem \ref{thm:virtualbbkm}, and let $F=Tube(K)$, a ribbon $T^2$-knot. Then, $\mu(F)=\beta(F)$.
 \end{restatable}
 
These modified BBKM knots with $n$ minima naturally give rise to tori in $S^4$ with $n$ minima, and in fact a bridge trisection diagrams with $3n$ arcs via Satoh's Tube map. This upper bound, together with Theorem \ref{thm:virtualbbkm}, shows that the welded bridge number can determine the bridge trisection number of knotted surfaces in certain cases.

 \begin{restatable}{cor}{bridgenumberribbon}
 \label{cor:bridgenumberribbon}
If $F$ is a ribbon $T^2$-knot satisfying the hypotheses of Theorem \ref{thm:virtualbbkm}, then $b(F)=3n$.
 \end{restatable}

 Due to the compatibility with amalgamation of Coxeter groups, the following corollary gives instances where we can guarantee that the meridional rank and the bridge number computed in this paper are $(-1)$-additive.

\begin{cor}
The meridional rank and the bridge number are $(-1)$-additive for even-twist spun BBKM knots and for ribbon surfaces obtained by applying Satoh's Tube map to virtual BBKM link diagrams.
\end{cor}

\subsection*{Organization}
This paper is organized as follows. In Section \ref{sec:prelims}, we define bridge number and meridional rank of classical knots and review some families of knots for which the MRC is known. In Section \ref{sec:pcycles} we develop a construction to build knots which can be labeled by $p$-cycles for any $p\geq 2$. We use this technique in Section \ref{sec:2-knots} to establish the MRC for a large family of 2-knots in Theorem \ref{thm:twistspunrank}. We then investigate the additivity of meridional rank under connected sum of 2-knots in Section \ref{sec:behaviorcsum}, and prove Theorem \ref{thm:nonadditivity}. In Section \ref{sec:weldedmrkconj}, we prove a relationship between the bridge numbers of certain welded knots and their associated knotted tori, and prove Theorem \ref{thm:virtualbbkm}. In Section \ref{sec:bridgetrisections}, we collect our applications to bridge trisections of knotted surfaces.

\subsection*{Acknowledgements}
The authors would like to thank Ryan Blair, Micah Chrisman, and Alexandra Kjuchukova, for helpful conversations and encouragement.

\section{Preliminaries}\label{sec:prelims}

\subsection{Meridional rank and bridge number of knots in $S^3$} Here we define the quantities in the classical Meridional Rank Conjecture.

\begin{defn}
Let $K:S^1\rightarrow S^3$ be a knot and let $N(K)$ be a tubular neighborhood of $K$. A fiber of $N(K)$ is a meridional disk $D=\{*\}\times D^2$ with $K\cap D=\{*\}$. A \textbf{meridian} of $K$ is an element of $\pi_1(S^3\setminus N(K))$ which is freely homotopic to $\partial D$ for some point $*$ on $K$. The \textbf{meridional rank} of $K$ is the minimal number of meridians which generate $\pi_1(S^3\setminus N(K))$.
\end{defn}

Of course, the rank of the knot group is a lower bound for the meridional rank, and a lower bound for the rank is (one plus) the minimal number of generators of the Alexander module. More subtle lower bounds are achieved by finding a quotient $G$ of $\pi_1(S^3\setminus N(K))$ which sends the meridians of $K$ to a specified conjugacy class of $G$. The minimal number of elements of this conjugacy class needed to generate $G$ is then a lower bound for $\mu(K)$. For example, although the rank of the symmetric group $S_n$ is two, the number of transpositions needed to generate is $n-1$. See \cite{livingston} for more examples and background.

There are multiple equivalent ways to define the bridge number. Each perspective has its own advantage. 

\begin{defn}
The \textbf{bridge number} of $K$, denoted $\beta(K)$, is the minimal number of minima of $K$, taken over all embeddings in $S^3$.
\end{defn}

\subsubsection{Overpass perspective}
\label{overpass perspective}

Let $D$ be a classical knot diagram. An \textit{overpass} (resp. \textit{underpass}) is a path of $D$ that contains at least one over-crossing (resp. under-crossing), but no under-crossing (resp. over-crossing). Any knot diagram can be bridge decomposed as an alternating sequence of overpasses and underpasses. The \textit{overpass bridge number} $\beta_O(K)$ of $K$ is the minimum number of overpasses over all bridge decompositions of $K$. 

\subsubsection{Height function on the plane perspective}\label{plane perspective}
Let $D$ be a classical knot diagram, and let $h:\mathbb{R}^2\rightarrow \mathbb{R}$ be the standard projection map $h(x,y) = y.$ We define the \textit{$\mathbb{R}^2$-bridge number} $\beta_{\mathbb{R}^2}(D)$ of $D$ to be the number of minima on $D$ for $h$. We then define the \textit{Morse bridge number} $\beta_{\mathbb{R}^2}(K)$ of a classical knot $K$ to be the minimum number $\beta_{\mathbb{R}^2}(D)$ over all classical knot diagrams representing $K.$

\subsubsection{Diagram coloring perspective}\label{wirtinger perspective}

For more details, see \cite{blair2020wirtinger}. Let $D$ be a diagram of $K$. A \textit{$k$-partial coloring} of a diagram $D$ is an assignment of the color blue to a subset of $k$ strands of $D$. Given a $k$-partial coloring of $D$, we can perform a \textit{coloring move} at any crossing where the overstrand and one of the understrands is colored: this colors the other understrand, producing a $(k+1)$-partial coloring. See highlighted crossings in Figure \ref{fig:demonstrate}. If there is a $k$-partial coloring of a diagram $D$ such that a sequence of coloring moves will color all of $D$, then we say that $D$ is \textit{$k$-colorable}. The initially colored strands of the $k$-partial coloring are called the \textit{seeds} of the coloring. The minimum $k$ such that $D$ is $k$-colorable is called the \textit{Wirtinger number} $\omega(D)$ of $D$. The Wirtinger number $\omega(K)$ of a knot $K$ is the smallest value $\omega(D)$ minimized over all diagrams of $K$.

It is well-known that $\beta(K) = \beta_O(K) = \beta_{\mathbb{R}^2}(K)$. The fact that $\beta(K) = \omega(K)$ is the main result of \cite{blair2020wirtinger}.

\subsection{Coxeter groups}
The \textit{Coxeter group} $C(\Delta)$ associated to a finite simple graph $\Delta$ with weighted edges is defined as follows: (1) Each vertex of $\Delta$ corresponds to a generator of $C(\Delta)$. (2) If $s$ is a generator of $C(\Delta),$ the $s^2 = 1.$ (3) If $s$ and $t$ are vertices that are connected by an edge of weight $k,$ then $(st)^k = 1.$ An element conjugate to any of the generators is called a \textit{reflection} and the number of vertices of $\Delta$ is the \textit{rank} of $C(\Delta)$. In particular, a generator is itself a reflection. The following proposition, which was observed in \cite{baader2020coxeter}, will be very useful in computing the meridional rank.

\begin{prop}
Suppose that there is a surjective map from $\pi_1(S^3\setminus N(K))$ to $C(\Delta)$ sending meridians to reflections, where the weight on each edge of $\Delta$ is at least two. Then, the meridional rank of $K$ is bounded below by the rank of $C(\Delta)$.
\end{prop}\label{prop:grouptheoryCoxeter}

\subsection{BBKM knots}\label{sec:bbkm}

In \cite{baader2020coxeter,baader2020bridge} many explicit surjections from classical knot groups to Coxeter groups are realized, with specified conjugacy classes for the meridians to map to, thus providing lower bounds for meridional rank. In these examples, they show that the classical Wirtinger number also equals the Coxeter group rank thus proving the meridional rank conjecture for these knots. We begin by giving brief descriptions of such knots.

\begin{defn}
We will refer to the arborescent knots and twisted knots admitting Coxeter quotients found in \cite{baader2020coxeter,baader2020bridge} as \textit{BBKM knots}.
\end{defn}

For our applications, we need the fact that these knots are constructed by piecing together simple pieces. For the twisted knot case, the building blocks are twist regions and each twist region is associated to two meridians. For the arborescent case, the building blocks are rational tangles, where each rational tangle is associated to either a single Coxeter generator or two distinct Coxeter generators.

\subsubsection{Classical twisted knots}
A classical knot diagram can be checkerboard colored, and there are two associated checkerboard surfaces that can be thought of as a union of disks and twisted bands. One can construct a graph $\Gamma$ from this surface, with one vertex for each disk and one edge for each band, weighted according to the number of signed half-twists in the band.
If a classical knot $K$ admits a diagram with a checkerboard surface such that the edge weights of the induced graph $\Gamma$ are all at least two in absolute value, and such that the plane dual graph $\Gamma^*$ has no multiple edges, then $K$ is called a \textit{twisted knot} \cite{baader2020coxeter}.

\subsubsection{Classical arborescent knots}
Each classical arborescent knot can be encoded with a weighted tree. Each vertex in this weighted tree is an annulus where the weight corresponds to the number of twists. An edge connecting two vertices corresponds to a plumbing of the two annuli. The arborescent knots that are BBKM knots can be further broken into two sub-families: arborescent knots associated to bipartite trees with even weights \cite{baader2020coxeter}, and arborescent knots associated to plane trees whose branching points carry a straight branch to at least three leaves \cite{baader2020bridge}.

\section{Labelling Knots with $p$-Cycles}\label{sec:pcycles}
In this section we develop a procedure to obtain knots colorable by $p$-cycles. 


\begin{figure}[ht!]
\labellist
\small\hair 2pt

\pinlabel \tiny\text{$(p+1,p+2,...,2p-2,2p-1,p)$} at 556 -29
\pinlabel \tiny\text{$(p+1,p+2,...,2p-2,2p-1,\textcolor{blue}{p-1})$} at 586 209
\pinlabel \tiny\text{$(p+1,p+2,...,2p-2,\textcolor{blue}{p-2},\textcolor{blue}{p-1})$} at 556 381
\pinlabel \tiny\text{$(p+1,\textcolor{blue}{1},...,\textcolor{blue}{p-3},\textcolor{blue}{p-2},\textcolor{blue}{p-1})$} at 556 531
\pinlabel \tiny\text{$(\textcolor{blue}{p},\textcolor{blue}{1},...,\textcolor{blue}{p-3},\textcolor{blue}{p-2},\textcolor{blue}{p-1})$} at -276 631
\pinlabel \tiny\text{$(1,2,\cdots,p)$} at -66 -29

\endlabellist
 \centering
    \includegraphics[width=.06\textwidth]{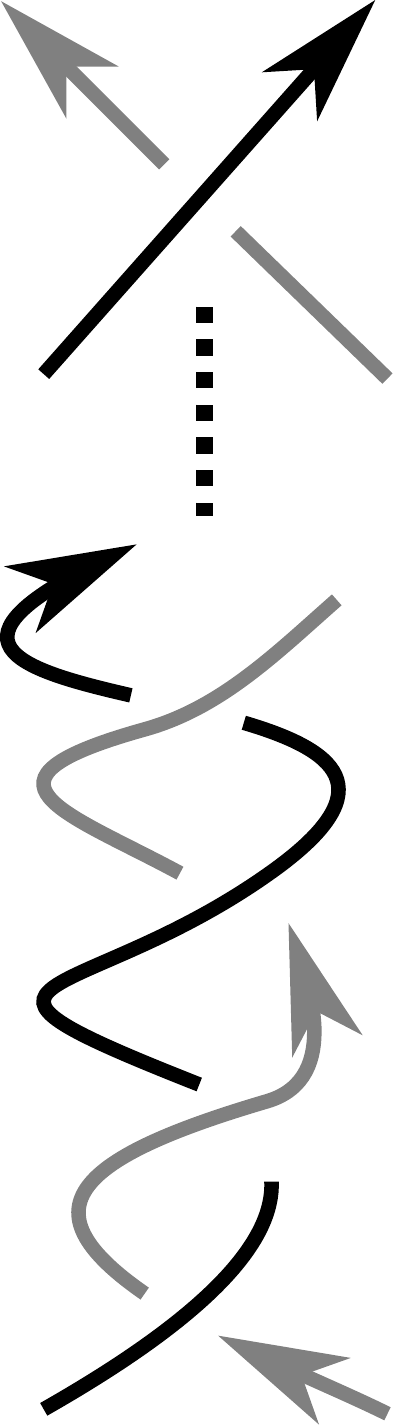}
    \caption{Labeling a twist region with $2p-1$ crossings by overlapping step $p$-cycles.}
    \label{fig:oddlabel}
\end{figure}

When we label knot diagrams with elements of order greater than two, orientations become important. We begin by showing that twist regions can be consistently labelled by $p$-cycles.

\begin{lem}\label{lemma:oddtwist}
The oriented classical two braid with $2p-1$ crossings pictured in Figure \ref{fig:oddlabel} can be labeled with step $p$-cycles $(1,2,\cdots, p)$ and $(p,p+1,\cdots ,2p-1)$.
\end{lem}

\begin{proof}
The tangle has two components (colored black and gray in Figure \ref{fig:oddlabel}). Starting at the bottom right label $(p+1,p+2,...,2p-2,2p-1,p)$, this component will go under the other component $p$ times and the sequence of labelings that will appear in order after the bottom left label is $(p+1,p+2,...,2p-2,2p-1,p-1) \rightarrow (p+1,p+2,...,2p-2,p-2,p-1) \rightarrow (p+2,p+3,...,p-3,p-2,p-1) \rightarrow \cdots$ and will emerge on the top left as $(p,1...,p-1)$.

Similarly, starting at the bottom left label $(1,2,...,p)$, this component will go under the other component $p-1$ times and emerges on the top left as $(p+1,...,2p-2,2p-1,p)$.

If the labels $(1,2,...,p)$ and $(p, p+1, ..., 2p-1)$ are permuted, a similar calculation shows that the top left labelling will agree with bottom left, and top right labelling with the bottom right.
\end{proof}



We now introduce a subset of the set of $p$-cycles which we will use to create more complicated knots with labellings. 
For convenience, we define $G_{p,n}$ as the symmetric group $S_{np-(n-1)}$ when $p\geq 2$ is even, and as the alternating group $A_{np-(n-1)}$ when $p\geq 3$ is odd.

\begin{defn}\label{def:step}
A {\bf step $p$-cycle} is a $p$-cycle of the form $(a,a+1,a+2,\dots,a+p-1)$. Given integers $p,n$, with $p\geq 3,n\geq 2$, we will refer to the set of {\bf once-overlapping step $p$-cycles} $O_{p,n}=\lbrace(1,...,p),(p,...,2p-1),\cdots,$  $((n-1)p-(n-2),...,np-(n-1))\rbrace \subset G_{p,n}$.
\end{defn}


Note that if $x$ and $y$ are in $O_{p,n}$, then they are either once-overlapping, or they permute disjoint elements and therefore commute: $xy=yx$. Note also that if they are once-overlapping, their product is a $(2p-1)$-cycle, e.g.\ $(1,2,3)(3,4,5)=(1,2,3,4,5)$.


Hoping to apply the following result of Annin and Maglione, we also would like to make sure that the elements of $O_{p,n}$ generate $G_{p,n}$.

\begin{thm}\cite[Theorem 3.1]{annin2012economical}
Let $m$ and $r$ be positive integers with $m \geq r \geq 2$ such that $(m, r) \neq
(2, 2)$ and $(m, r) \neq (3, 3)$. If $r$ is odd (respectively, even), then the minimum number
of $r$-cycles needed to generate $A_m$ (respectively, $S_m$) is max$\lbrace 2, \ceil{\frac{m-1}{r-1}} \rbrace.$
\end{thm}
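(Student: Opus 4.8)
\emph{Set-up and the parity dichotomy.} The plan is to prove matching lower and upper bounds for $N(m,r)$, the minimum number of $r$-cycles generating the relevant group, where I write $k=\max\{2,\ceil{\frac{m-1}{r-1}}\}$ for the claimed value. The first point to record is why the target is $A_m$ when $r$ is odd and $S_m$ when $r$ is even: an $r$-cycle is an even permutation precisely when $r$ is odd. Hence for odd $r$ every product of $r$-cycles lies in $A_m$, so $A_m$ is the largest attainable group, while for even $r$ the generators are odd permutations, so once the generated group contains $A_m$ it must equal $S_m$. This reduces everything to showing that the generated group contains $A_m$, together with the (already free) parity upgrade.

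\emph{Lower bound.} Let $\sigma_1,\dots,\sigma_t$ be $r$-cycles generating $A_m$ or $S_m$, and put $S_i=\mathrm{supp}(\sigma_i)$, a set of $r$ points. Since $A_m$ and $S_m$ are transitive for $m\geq 3$, the hypergraph on $\{1,\dots,m\}$ with hyperedges $S_1,\dots,S_t$ must be connected and cover all $m$ points, for otherwise the generators would preserve a nontrivial partition or fix a point. Ordering the $S_i$ so that each meets the union of its predecessors, the first contributes $r$ points and each later one at most $r-1$ new points, so the union has at most $r+(t-1)(r-1)=t(r-1)+1$ points; covering all of $\{1,\dots,m\}$ forces $t\geq\ceil{\frac{m-1}{r-1}}$. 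Separately, a single $r$-cycle generates a cyclic group of order $r$, which equals $A_m$ or $S_m$ only in the excluded cases $(m,r)=(3,3)$ and $(2,2)$; outside these, $t\geq 2$. Together these give $t\geq k$.

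\emph{Upper bound.} I will produce $k$ cycles attaining the bound. Take windows $W_1,\dots,W_k$ of $r$ consecutive integers covering $\{1,\dots,m\}$, with consecutive windows overlapping in a single point, except that the last window is allowed to overlap more so that the union is exactly $\{1,\dots,m\}$ (possible since $m\leq k(r-1)+1$), and let $\sigma_j$ be the increasing $r$-cycle on $W_j$. The engine is the classical theorem of Jordan that a primitive permutation group of degree $m$ containing a $3$-cycle contains $A_m$. Transitivity always holds by connectivity of the windows. A $3$-cycle is available at the base stage: for $\alpha=(1,\dots,r)$ and $\beta=(r,\dots,2r-1)$ one computes $\alpha\beta\alpha^{-1}=(1,r+1,\dots,2r-1)$, which agrees with $\beta$ on the $r-1$ points $r+1,\dots,2r-1$ and differs in one, so $\beta(\alpha\beta\alpha^{-1})^{-1}$ collapses to the $3$-cycle $(1,r,r+1)$; at later stages the $3$-cycle is inherited from an alternating subgroup already present. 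Primitivity I establish by induction on $k$: if $H=\langle\sigma_1,\dots,\sigma_{k-1}\rangle\supseteq A_{m'}$ on $\Omega'=\{1,\dots,m'\}$ with $m'=(k-1)(r-1)+1\geq 3$, then $A_{m'}\leq G:=\langle H,\sigma_k\rangle$ is primitive on $\Omega'$, so any $G$-block system restricts to the trivial partition of $\Omega'$; the inequality $m<2m'$, which holds for all $k\geq 2$, then rules out both the case where $\Omega'$ lies in one block (forcing a block of size $>m/2$) and the case where the blocks meet $\Omega'$ in singletons (forcing more than $m/2$ blocks, hence block size $<2$). Jordan now yields $G\supseteq A_m$, and the parity remark finishes with $A_m$ (odd $r$) or $S_m$ (even $r$).

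\emph{The main obstacle.} The genuinely hard step is the base case $k=2$, namely that two $r$-cycles overlapping in a single point generate $A_{2r-1}$ or $S_{2r-1}$: here there is no large alternating subgroup from which to bootstrap primitivity. I would resolve it using the extra structure that $\alpha\beta=(1,2,\dots,2r-1)$ is a full $(2r-1)$-cycle, so $G$ contains a regular cyclic subgroup whose only block systems are the arithmetic-progression partitions, and then verifying that $\alpha$ (which fixes the $r-1\geq 2$ points $r+1,\dots,2r-1$) preserves none of them nontrivially; alternatively one conjugates $(1,r,r+1)$ by powers of $\alpha$ and $\beta$ to manufacture all consecutive $3$-cycles $(i,i+1,i+2)$, which generate $A_{2r-1}$ outright and bypass Jordan. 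Two bookkeeping points remain: the non-extremal case $(r-1)\nmid(m-1)$ is absorbed by enlarging the overlap of the last window, and the case $m=r$ (where $k=2$ but the two cycles share all $r$ points) is handled by the same two-generator analysis applied to two distinct $r$-cycles on a common support; throughout one must fix orientations so that the commutator still yields a $3$-cycle and the nesting $A_{m'}\leq A_m$ persists.
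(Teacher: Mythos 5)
A preliminary remark: the paper does not prove this statement at all. It is quoted verbatim from Annin and Maglione \cite{annin2012economical} and used as a black box (only the consequence $\lceil\frac{m-1}{p-1}\rceil=n$ for $m=np-(n-1)$, together with Lemma \ref{generatingset}, is ever invoked). So there is no in-paper argument to compare yours against; I can only assess your proposal on its own terms.

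Your lower bound is complete and correct: the connectivity/covering count on supports gives $t\geq\lceil\frac{m-1}{r-1}\rceil$, and a single $r$-cycle generates a cyclic group of order $r$, which equals $A_m$ or $S_m$ only in the excluded cases $(3,3)$ and $(2,2)$, so $t\geq 2$ otherwise. The parity dichotomy and the overall architecture of the upper bound are also sound: the commutator computation $\beta(\alpha\beta\alpha^{-1})^{-1}=(1,r,r+1)$ for two $r$-cycles meeting in the single point $r$ checks out, the inductive primitivity argument via restricting a block system to $\Omega'$ works because $m<2m'$ holds for all $k\geq 2$, and Jordan's theorem then applies. The genuine gap is the base case $k=2$ outside the extremal configuration $m=2r-1$. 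Both of your proposed resolutions for $k=2$ --- the full cycle $\alpha\beta=(1,\dots,2r-1)$ forcing blocks to be congruence classes, and the manufacture of consecutive $3$-cycles --- rely on the two supports overlapping in exactly one point. But $k=2$ also covers every $m$ with $r\leq m\leq 2r-2$, where any two $r$-subsets of $\{1,\dots,m\}$ must overlap in $2r-m\geq 2$ points; there the product of the two cycles is in general not an $m$-cycle (for $r=3$, $m=4$, the product of $(1,2,3)$ and $(2,3,4)$ is a double transposition), and the single-overlap commutator no longer yields a $3$-cycle. Your sentence asserting these cases are ``absorbed'' or ``handled by the same two-generator analysis'' is precisely where the real content of Annin--Maglione's theorem lives: one must exhibit, for each such $(m,r)$, a concrete pair of $r$-cycles and verify generation, and it is exactly this analysis that detects the exceptional pairs $(2,2)$ and $(3,3)$. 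Until that case analysis is supplied, the upper bound, and hence the theorem, is not proved.
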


If $m= np-(n-1)=n(p-1)+1$, then $\ceil{\frac{m-1}{p-1}} = \ceil{\frac{np-n}{p-1}} = n.$ Note that knots in $\mathcal{K}_{p,n}$ can be labeled by $n$ $p$-cycles from $G_{p,n}$. We still need to show that these overlapping step cycles generate.

\begin{lem}
The collection of $n$ overlapping step $p$-cycles $O_{p,n}=\lbrace(1,...,p),(p,...,2p-1),(2p-1,....,3p-2),\cdots,$  $((n-1)p-(n-2),...,np-(n-1))\rbrace$ generates $A_{np-(n-1)}$ (respectively, $S_{np-(n-1)}$), when $p$ is odd (respectively, even).\label{generatingset}
\end{lem}

\begin{proof}
By Corollary 2.4 of \cite{annin2012economical}, the set of all step cycles of length $p$ generates $A_{np-(n-1)}$ for $p$ odd (resp. $S_{np-(n-1)}$ for $p$ even), so we need only show that $O_{p,n}$ generates the set of all step $p$-cycles. Note that the (ordered) product of the elements of $O_{p,n}$ is $(1,...,p)(p,...,2p-1)\cdots((n-1)p-(n-2),...,np-(n-1)) = (1,2,\dots,np-(n-1))$, an $(np-(n-1))$-cycle: we denote this element $\tau$. Now we will show that any step $p$-cycle is generated by $\tau$ and $(1,2,\dots,p)$.
Let $a$ be an integer in $\{1,2,\dots,np-(n-1)\}$. Observe that the step cycle of length $p$ starting at $a$ can be written as:

\begin{align*}
    \tau^{a-1}(1,2,...,p)\tau^{-(a-1)} = (\tau^{a-1}(1),\tau^{a-1}(2), \dots, \tau^{a-1}(p)) = (a,a+1,\dots,a+p-1) .
\end{align*}
\end{proof}


Brunner described a method to construct a link in $S^3$ from a weighted simple planar graph, which admits a surjection to a Coxeter group whose presentation can be read off from the graph.
Now we adapt his construction to the more subtle case of labeling by step $p$-cycles as opposed to just order two elements. The following construction builds links in $S^3$ which can be labelled by overlapping $p$-cycles, yielding a surjection of the link group to $G_{p,n}$ which detects the meridional rank.

\begin{con}\label{const:oddp}
Let $\Gamma$ be a simple planar graph, and form its dual $\Gamma^*$. Label the edges of $\Gamma^*$ with elements from the set $\{0,1\}$.
As in \cite{brunner}, blow up the vertices of $\Gamma^*$ to disks, and the edges to twisted bands. At this stage, the number of half-twists in each band is only determined up to parity: an even number of half-twists if that edge was labelled with 0, and an odd number if labelled with 1. This determines the connectivity of the resulting link diagram. Choose an orientation on each component. Let $n=|V(\Gamma)|$. As shown in \cite{brunner}, there is a choice of $n$ meridians, one for each region of $\mathbb{R}^2 - \Gamma^*$, which will generate the group of the complement of the resulting link. The following claim summarizes a strategy to choose the number of half-twists in each box so that the resulting link $L$ will be colorable by step $p$-cycles.

\vspace{.2cm}

\noindent{\bf Claim.} If a generating set of $n$ meridians on the resulting link diagram can be labelled by the $n$ elements of ${O_{p,n}}$, subject to the following rules, then the labelling corresponds to a surjection $\pi_1(S^3\setminus L) \rightarrow G_{p,n}$, sending meridians to step $p$-cycles. 

{\bf Rule 1.} If the labels at the bottom of a twist box are the same or permute disjoint cycles, then any number of half-twists may be chosen (respecting the parity choice already made).

{\bf Rule 2.} If the labels at the bottom of a twist box are once-overlapping, then the number of half-twists can be chosen as follows (see Figure \ref{fig:twistboxes}):

\hspace{\parindent} {\it Case I.} If both strands travel the same direction, then any multiple of $2p-1$ half-twists may be chosen (respecting the parity choice already made).

\hspace{\parindent} {\it Case II.} If one strand travels up and one down, then any even multiple of $2p-1$ half-twists may be chosen (note this necessitates having an even twist box to begin with).
\qed

\vspace{.3cm}
We now prove that the rules above yield a valid labelling of the diagram, which amounts to showing that the labels at the bottoms of the twist boxes agree with the corresponding labels at the top. We prove the case $k=1$, which implies the statement for all positive $k$. The proof is similar for negative $k$, and trivial when $k=0$.

Let $x,y \in O_{p,n}$. If $x$ and $y$ do not overlap, then they commute, and the Wirtinger relations become trivial. Similarly, if $x=y$ the relations are trivial. Otherwise, $x$ and $y$ are once-overlapping, so $xy$ and $\overline{x}y$ are $(2p-1)$-cycles. 

First consider Case I, the left-hand picture of Figure \ref{fig:twistboxes}. The calculation in Figure \ref{fig:oddlabel} shows that the top-left label is $(1,...,p)$ and the top-right label is $(p,...,2p-1)$. In terms of $x_1$ and $y_1$, this says that 
$x_1^\prime=({x_1}y_1)^{-(p-1)}y_1({x_1}y_1)^{p-1}=x_1$, and\\
$y_1^\prime=({x_1}y_1)^{-p}x_1({x_1}y_1)^{p}=y_1$ (in fact, the first equation implies the second, since $({x_1}y_1)^{2p-1}$ is trivial).
As noted in the proof of Lemma \ref{lemma:oddtwist}, the desired conclusion holds if these labels are permuted.

Now consider Case II, the right-hand picture of Figure \ref{fig:twistboxes}. The Wirtinger relations from the crossings in the twist-box imply that $x_2^\prime=(\overline{x_2}y_2)^{-(2p-1)}x_2(\overline{x_2}y_2)^{2p-1}$. Since $\overline{x_2}y_2$ is a $(2p-1)$-cycle, we have $x_2^\prime=x_2$. Similarly, $y_2^\prime=(\overline{x_2}y_2)^{-(2p-1)}y_2(\overline{x_2}y_2)^{2p-1}=y_2$.

\end{con}


\begin{defn}\label{def:KPN} 
Let $p,n\in\mathbb{Z}$ with $p,n\geq 2$. We define $\mathcal{K}_{p,n}$ to be the set of knots resulting from the above construction. 


\begin{figure}
    \centering
    \includegraphics[width=0.3\linewidth]{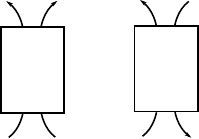}
        \put(-101,36){\tiny{$k(2p-1)$}} 
        \put(-111,0){\tiny{$y_1$}}
        \put(-74,0){\tiny{$x_1$}}
        \put(-111,75){\tiny{$y_1^\prime$}}
        \put(-74,75){\tiny{$x_1^\prime$}}
        \put(-30,36){\tiny{$2k(2p-1)$}} 
        \put(-38,0){\tiny{$y_2$}}
        \put(-2,0){\tiny{$x_2$}}
        \put(-38,75){\tiny{$y_2^\prime$}}
        \put(-2,75){\tiny{$x_2^\prime$}}
    \caption{The allowable twist regions in Construction \ref{const:oddp}. The relations $x_i=x_i^\prime$ and $y_i=y_i^\prime$ are guaranteed whenever $x_i$ and $y_i$ are chosen from $O_{p,n}$.}
    \label{fig:twistboxes}
\end{figure}

\end{defn}

\begin{exmp}
$K_{2,2}$ is the set of 2-bridge twisted knots with a surjection to $S_3$, the third symmetric (and dihedral) group. It is straightforward to check that when $|V(\Gamma)|=2$, any knot resulting from Construction \ref{const:oddp} will be of the form $T(2,2m+1)$. Thus $K_{2,2}=\{T(2,6k+3):k\in\Z\}$, the 2-bridge torus knots which admit a tricoloring.
\end{exmp}

\begin{exmp}
The torus knot $T(2,2p-1)$ is in $K_{p,2}$, and therefore admits a surjection to $G_{p,2}$, sending meridians to $p$-cycles. The $n$-fold connected sum of $T(2,2p-1)$ is in $\mathcal{K}_{p,n+1}$.
\end{exmp}

\begin{rem} \label{rem:manycolorings}
Note that Construction \ref{const:oddp} shows that a single knot may admit arbitrarily many surjections to symmetric or alternating groups of different orders, sending meridians to cycles of different lengths. For example, consider the torus knot $T(2,35)=K$. Since $35 = 2*18-1$, $K$ may be labelled by 18-cycles in $G_{18,2}=S_{35}$.
Since $35=7*5=7(2*3-1)=5(2*4-1)$, $K$ may also be labelled by 3-cycles in $G_{3,2}=A_5$, and by 4-cycles in $G_{4,2}=S_7$.
\end{rem}

\begin{exmp} 
The generalized ($q_1,q_2,q_3,...,q_n)$-pretzel knot where $q_i$ is an odd multiple of $2p-1$ for $i\neq n$ and $q_n$ is even is in $\mathcal{K}_{p,n}$ (note that the first and last labels commute, so the crossings in the even twist region do not add any relations). Figure \ref{fig:example} depicts a specific case where our knot $J$ is the $(5,5,5,2)$-pretzel knot. 
\end{exmp}
\begin{figure}
\labellist
\small\hair 2pt

\pinlabel \tiny\text{$(123)$} at 2570 240
\pinlabel \tiny\text{$(345)$} at 1400 239
\pinlabel \tiny\text{$(567)$} at 1770 265
\pinlabel \tiny\text{$(789)$} at 2200 385

\endlabellist
    \centering
   \includegraphics[width=6cm]{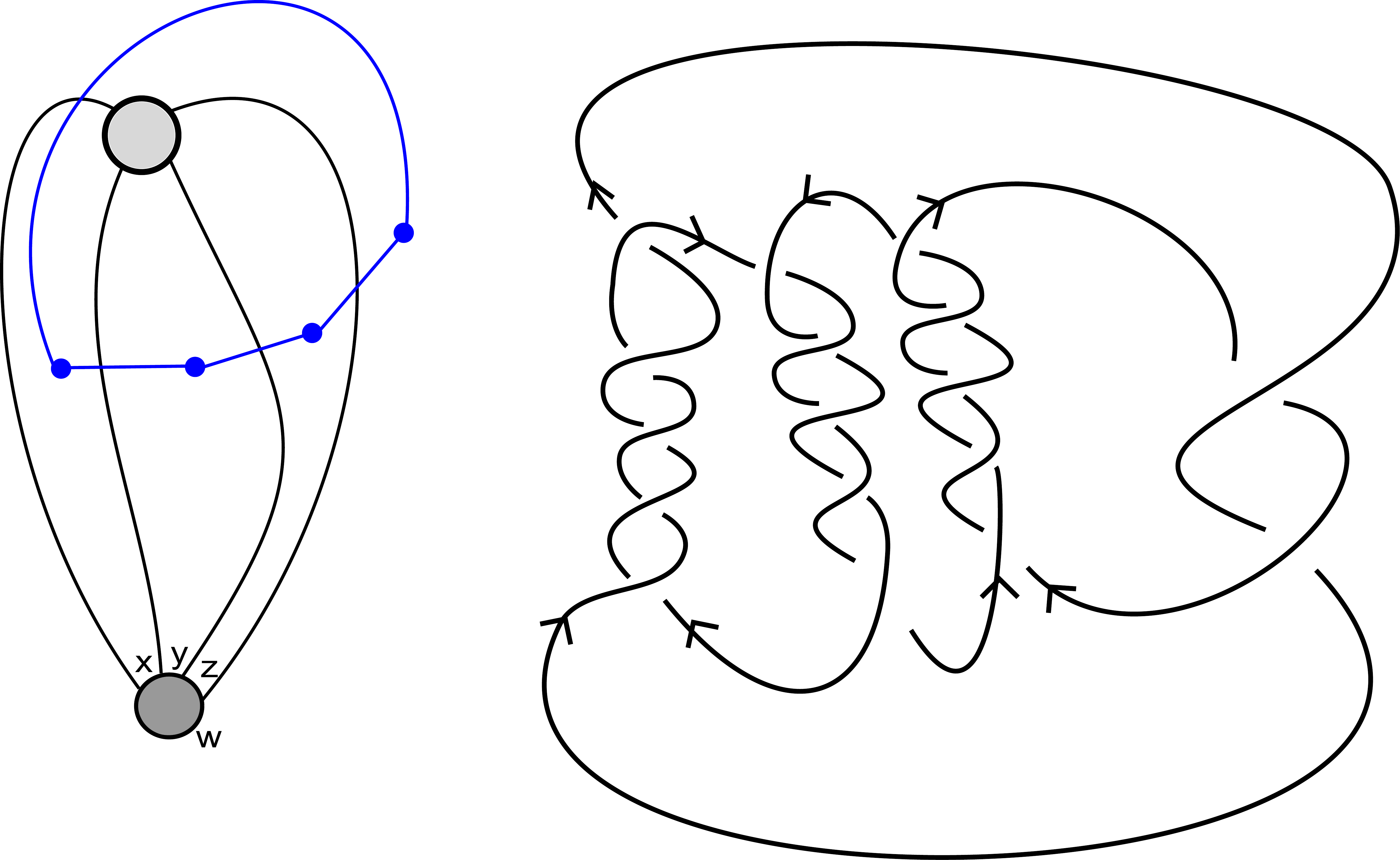} 
    \caption{At left, the cycle graph $\Gamma$ and its dual $\Gamma^*$ are shown. The vertices of $\Gamma^*$ are fattened into disks and edges into twisted bands, forming the generalized pretzel knot $K \in \mathcal{K}_{3,4}$ (right).} 
    \label{fig:example}%
\end{figure}
\begin{figure}[ht!]

 \centering
    \includegraphics[width=.3\textwidth]{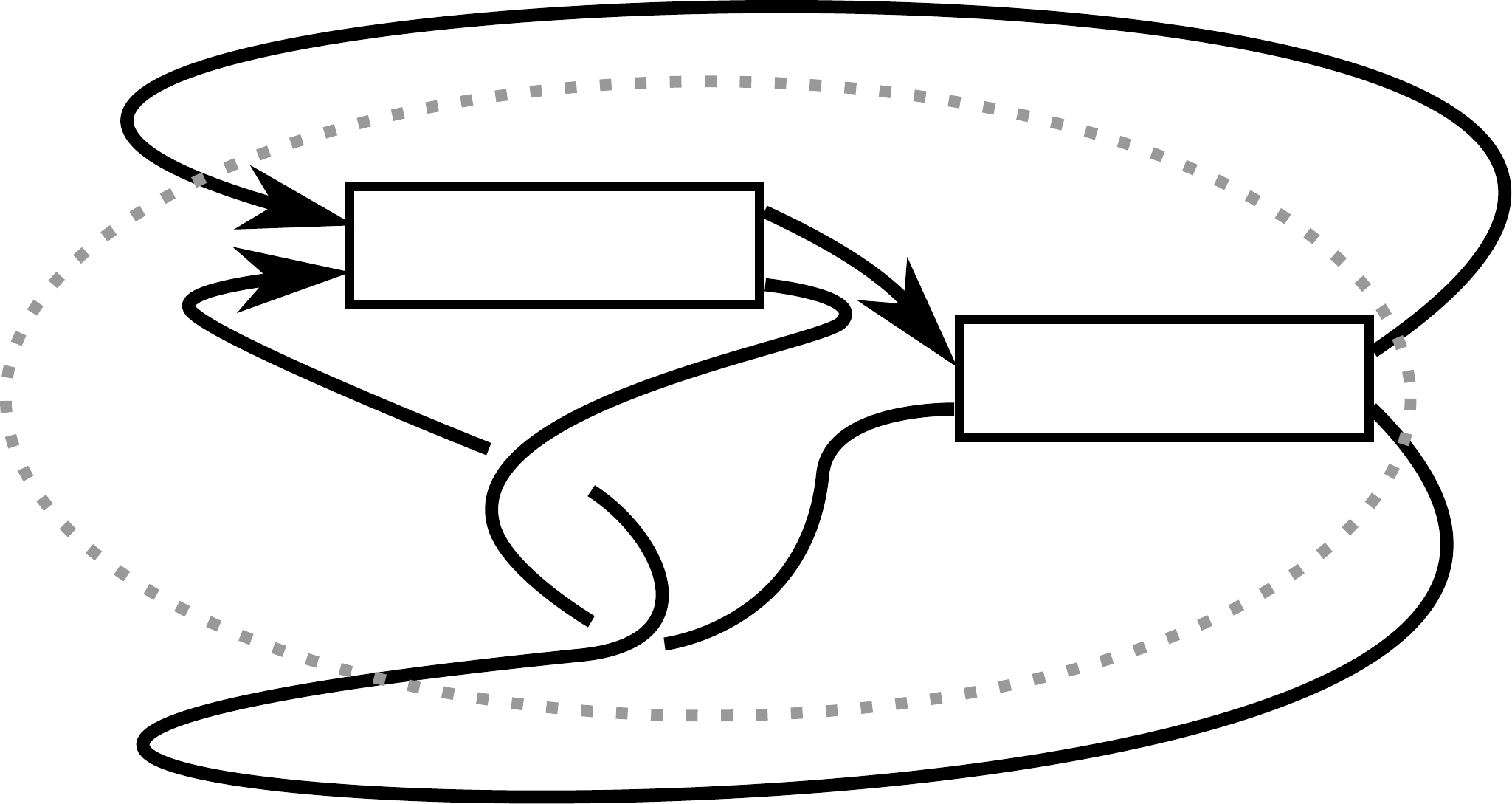}
    \caption{The closure of a rational tangle in $\mathcal{K}_{p,2}$. Each rectangle represents a horizontal twist region containing a multiple of $2p-1$ crossings (one odd and one even).}
    \label{fig:2bridgekpn}
\end{figure}
\begin{rem}
Instead of using just twist regions in the construction of $\mathcal{K}_{p,n}$, one may use non-integer rational tangles by making sure that the end result is a knot and the orientations are compatible (see Figure \ref{fig:2bridgekpn}).
\end{rem}

The preceding discussion proves that for any knot $K$ in $\mathcal{K}_{p,n}$, the meridional rank of $K$ is equal to $n$. The same techniques utilizing the Wirtinger number as in \cite{blair2020wirtinger} show that their bridge number is also $n$.

\begin{cor}
If $K\in \mathcal{K}_{p,n}$, then $\mu(K)=\beta(K)=n$.
\end{cor}

\begin{prop} \label{prop:kpn_additive}
This construction is well-suited for connected sums of knots. If $K_1\in K_{p,n_1}$ and $K_2\in K_{p,n_2}$, then $K_1 \# K_2 \in K_{p,n_1+n_2-1}$.
\end{prop}

\section{Meridional Rank and Bridge Number of 2-Knots}\label{sec:2-knots}
In this section, we define the meridional rank and bridge number of knotted surfaces in $S^4$, and use the labelings from the previous section to prove Theorem \ref{thm:twistspunrank}. We then investigate the additivity of meridional rank under connected sum of 2-knots and prove Theorem \ref{thm:nonadditivity}.

\subsection{2-knots in $S^4$}
A {\bf 2-knot} $K:S^2\rightarrow S^4$ is a smoothly embedded sphere in $S^4$. We assume our embeddings are in Morse position with respect to the standard height function $S^4\rightarrow \R$, and therefore have finitely many critical points.

The {\bf bridge number} and {\bf meridional rank} of a 2-knot are defined exactly analogously as in the case of knots in $S^3$: the bridge number is the minimal number of minima over all Morse embeddings, and the meridional rank is the minimal number of meridians needed to generate $\pi_1(S^4\setminus N(K))$. As in the case of classical knots, the meridional rank of a 2-knot is a natural lower bound for its bridge number. Sometimes we will write $\pi K$ as an abbreviation for $\pi_1(S^4\setminus K)$.


\begin{prop}
Let $K$ be a 2-knot. Then $\mu(K)\leq \beta(K)$.\label{prop:merandbridge}
\end{prop}

\begin{proof}
Consider an embedding $K$ with $\beta(K)=n$ minima. Taking a meridian $\mu_i$ to each of these minima yields a generating set for $\pi_1(S^4\setminus N(K))$, since the index 0 critical points of $K$ correspond to the 1-handles in a handle decomposition of $S^4\setminus N(K)$ \cite{gompfandstipsicz}.
\end{proof}

\subsubsection{Twist-spun knots}\label{definezeeman}
Let $K\subseteq S^3$ be a knot. Delete a small neighborhood of a point on $K$ to obtain a tangle $(B^3,K^\circ)$. Let $(B^4,D)$ denote the trace of the identity isotopy of $K^\circ$ in $B^3$, i.e.\ $(B^4,D)=(B^3,K^\circ)\times I$ and $D$ is the standard half-spun disk for $K\# {-K}$. Let $(B^4,D_m)$ denote the trace of the isotopy that rotates $K^\circ$ around its axis $m$ times, for some $m\in\Z$. The \textbf{$m$-twist spin} of $K$ is defined to be the 2-knot $(S^4,\tau^m K) = (B^4,D) \cup (B^4,D_m)$. 

Twist-spun knots were introduced by Zeeman \cite{zeeman}, who proved that for $m\neq 0$, \newline $S^4\setminus N(\tau^m K)$ is fibered by the $m$-fold cyclic branched cover of $K$. Note this implies that $\tau^{\pm 1}K$ is always unknotted. The group of $\tau^m K$ is a quotient of $\pi K$, obtained by centralizing the $m^{\text{th}}$ power of a meridian. More generally, we can consider a general ambient isotopy $f=\{f_t \ | \ t\in [0,1]\}$ of the tangle, where $f|_{\partial B^3}=id, f_1(K^\circ)=K^\circ$, as in \cite{litherland1979deforming}. The motion $f$ is called a deformation and $fK$ is called a {\bf deform-spin} of $K$.

\begin{prop}\label{prop:easyupperbound}
Let $K$ be a knot in $S^3$. Then, $\beta(f K) \leq \beta (K)$ and $\mu(f K) \leq \mu (K)$.
\end{prop}

\begin{proof}
The deform-spun knot $fK$ can be thought of as only doing the deformation in the top half, i.e.\ we glue together the deform-spun disk on top and the standard half-spun ribbon disk for $K\# -K$ on bottom. By starting with $K$ in minimal bridge position, and removing a small 3-ball centered on a maximum of $K$, the half-spun disk will have $\beta(K)$ minima. The top half has no minima, as it is a ribbon disk for $K\# -K$. Thus $\beta( fK)\leq \beta(K)$.

The group $\pi (f K)$ is obtained from $\pi K$ by identifying $f_1(x)$ with $x$, for each meridian $x$ of $\pi K$ \cite{litherland1979deforming}. Thus there is a quotient map $\pi K \to \pi (f K)$, which sends meridians to meridians because meridional curves of the punctured $K$ are meridional curves of $f K$.
This shows the second inequality.

\end{proof}



\subsection{Twist-spun 2-knots}
Using Construction \ref{const:oddp}, for any twist index $m$, we can construct examples of $m$-twist-spun 2-knots whose meridional rank and bridge number are detected.


\twistspun*

\begin{proof}[Proof of Theorem \ref{thm:twistspunrank}]

We take $K$ from the family $\mathcal{K}_{m,n}$ defined in Section \ref{sec:pcycles}. These knots have surjections to $G_{m,n}$, sending meridians to step $m$-cycles. 
Since the group of the twist-spun knot centralizes $m^\text{th}$ powers of meridians, the quotient map $\pi K \to G_{m,n}$ factors through the map $\pi K \to \pi (\tau^m K)$, so $\pi (\tau^m K)$ has a surjection to $G_{m,n}$ as well, sending meridians to $m$-cycles. As shown in Section \ref{sec:pcycles}, $n$ $m$-cycles are needed to generate $G_{m,n}$, and therefore $\mu(\tau^m K) \geq n$. 

The bridge number of $\tau^m K$ is at most $\beta(K)=n$, by Proposition \ref{prop:easyupperbound}. Thus $\mu(\tau^m K) = \beta(\tau^m K) = n$.

\end{proof}

\begin{rem}
If we combine Theorem \ref{thm:twistspunrank} with the observation in Remark \ref{rem:manycolorings}, we can actually show that for any $n\geq 2$ and vector $(m_1,\dots,m_q)$ with $|m_i| \neq 1$, there exist infinitely many knots $K\subset S^3$ such that $\mu(\tau^{m_i} K) = \beta(\tau^{m_i} K) = n$ for all $i$. For example, for odd $j \geq 1$ let $K_j$ be the $n$-fold connect sum of $T(2,jM)$, where $M=(2m_1 - 1)(2m_2 - 1)\cdots(2m_q-1)$.
\end{rem}

We remark that Coxeter quotients, where images of meridians have order two, are sufficient to prove the theorem for all even twist indices, and thus any BBKM knot will suffice when $m$ is even. This observation was the starting point of our efforts to find compatible quotients for any $m$-twist-spun knot.

In addition, the technique used in Theorem \ref{thm:twistspunrank} can be used for other motions that affect the fundamental group of $K$ by centralizing powers of a certain element $\gamma$ of $\pi K$. To elaborate, we remind the reader of the proof of Proposition \ref{prop:easyupperbound}. Given a meridional presentation $\langle x_1,...,x_n \;\ | \;\  R\rangle$ of $K$, we get a meridional presentation $\langle x_1,...,x_n \;\ | \;\ R,f_1(x_i) = x_i \;\  (i=1,...,n)\rangle$ for the deform spin of $K$ with motion $f$. Note that $\gamma$ is a meridian when the motion is $m$-twist-spinning and $f_1(x_i)=x^{-m}x_ix^m$ for a meridian $x$. When the motion is Litherland's $m$-roll spinning \cite{litherland1979deforming}, the element $\gamma$ is the Seifert longitude $\lambda$ and $f_1(x_i)=\lambda^{-m}x_i\lambda^m$.

\begin{cor}
    Let $K \in \mathcal{K}_{p,n}$. Using the notations in the previous paragraph, suppose that $f_1(x_i) = \gamma^{-1}x_i\gamma$. Then, $\mu(K) = \mu(f^m K)= \beta(f^m K) = n$, where $m = |G_{p,n}| = \frac{1}{2}(np-(n-1))!$ when $p$ is odd and $(np-(n-1))!$ when $p$ is even.
\end{cor}
\begin{proof}
    Recall that $K \in \mathcal{K}_{p,n}$ has a surjection to $G_{p,n}$. Call that surjection $\phi$. We get a surjective homomorphism from  $\pi(f^{m}K) = \langle x_1,...,x_n \;\ | \;\ R,\gamma^{-m}x_i\gamma^{m} = x_i \;\  (i=1,...,n)\rangle$ to $G_{p,n}$ as well because $\phi(\gamma)$ raised to the order of $G_{p,n}$ is trivial.
\end{proof}

\subsection{Behavior of meridional rank under connected sum}\label{sec:behaviorcsum}

In this section we study how the meridional rank of 2-knots (knotted spheres in $S^4$), changes under connected sum. Note that if the meridional rank conjecture for classical knots is true then their meridional ranks must be $(-1)$-additive, since the bridge number is known to have this property. One elementary bound for the meridional rank of a connected sum is the following.

\begin{prop} \label{prop:connectsum}
For any orientable knotted surfaces $K_1,K_2$:
\begin{center}$max\{\mu(K_1), \mu(K_2)\} \leq \mu(K_1\# K_2) \leq \mu(K_1) + \mu(K_2) - 1$.\end{center}
\end{prop}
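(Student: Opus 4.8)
The plan is to identify the fundamental group of the connected-sum complement as an amalgamated free product over a meridian, and then read off both inequalities from the structure of this product. Concretely, write $G_i=\pi_1(S^4\setminus N(K_i))$ and form $K_1\#K_2$ by deleting a trivial disk $D_i$ from each $K_i$ inside a ball $B_i$ and tubing the two pieces along a separating $3$-sphere $\Sigma$; then $\Sigma$ meets $K_1\#K_2$ in a single unknotted circle $\gamma$. I would decompose $S^4\setminus N(K_1\#K_2)$ as $X_1\cup_T X_2$, where $X_i=B_i\setminus N(K_i\setminus D_i)$ and $T=\Sigma\setminus N(\gamma)$ is a solid torus with $\pi_1(T)=\langle m\rangle\cong\mathbb{Z}$ generated by the meridian $m$ of $\gamma$ (which is simultaneously a meridian of $K_1$, of $K_2$, and of $K_1\#K_2$). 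Because the trivial disk in the complementary ball contributes nothing, $\pi_1(X_i)\cong G_i$, and van Kampen gives
$$\pi_1(S^4\setminus N(K_1\#K_2))\;\cong\;G_1*_{\langle m\rangle}G_2.$$
I would also record that, each $K_i$ being connected, all meridians are mutually conjugate in the relevant group.

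For the upper bound, take minimal meridional generating sets $\{a_1,\dots,a_p\}$ of $G_1$ and $\{b_1,\dots,b_q\}$ of $G_2$, with $p=\mu(K_1)$ and $q=\mu(K_2)$. Using conjugacy of meridians I would conjugate each set inside its own factor so that $a_1=b_1=m$; since conjugation is an automorphism, the conjugated sets still generate. Their union $\{m,a_2,\dots,a_p,b_2,\dots,b_q\}$ then consists of $p+q-1$ meridians and generates the amalgamated product, yielding $\mu(K_1\#K_2)\le\mu(K_1)+\mu(K_2)-1$.

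For the lower bound, I would build a meridian-preserving retraction $\phi\colon G_1*_{\langle m\rangle}G_2\to G_1$: set $\phi|_{G_1}=\mathrm{id}$ and define $\phi$ on $G_2$ as the abelianization $G_2\to H_1(S^4\setminus N(K_2))\cong\mathbb{Z}$ followed by the map sending the generator to $m\in G_1$ --- here orientability of $K_2$ is what guarantees the first homology is infinite cyclic with a meridian as generator. Both factors send $m\mapsto m$, so $\phi$ respects the amalgamation and is a well-defined surjection. If $\{w_1,\dots,w_k\}$ is a minimal meridional generating set of the product with $k=\mu(K_1\#K_2)$, then each $w_j$ is conjugate to $m$, hence $\phi(w_j)$ is a meridian of $K_1$, and the elements $\phi(w_j)$ generate $G_1$; thus $\mu(K_1)\le k$. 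By symmetry $\mu(K_2)\le k$, giving $\max\{\mu(K_1),\mu(K_2)\}\le\mu(K_1\#K_2)$.

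I expect the main obstacle to be the careful justification of the van Kampen decomposition --- pinning down $\gamma$ and verifying that the amalgamating subgroup is exactly the meridional $\mathbb{Z}$ while each factor recovers $G_i$ --- together with confirming that $\phi$ is well defined and genuinely carries meridional generators to meridional generators.
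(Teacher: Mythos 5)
The paper states this proposition without proof (calling it ``elementary''), so there is nothing to compare against line by line; your argument is correct and complete, and it uses exactly the amalgamated-product decomposition $G_1 *_{\langle m\rangle} G_2$ that the paper itself relies on in Lemma \ref{lem:nonadditivity}. Both halves are sound: the upper bound by conjugating each minimal meridional generating set within its factor to share the amalgamating meridian, and the lower bound via the meridian-preserving retraction onto $G_1$, where you correctly identify orientability as the hypothesis guaranteeing $H_1(S^4\setminus N(K_2))\cong\mathbb{Z}$ is generated by a meridian so that the retraction respects the amalgamation.
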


\begin{proof}
The group of $K_1\# K_2$ surjects onto the group of $K_i$ by abelianizing the other factor. So if $x_1,\dots,x_n$ are meridians which generate $\pi(K_1\# K_2)$, then their images under this quotient map are meridians which generate $\pi(K_i)$. This proves the first inequality. The second is proved by taking the obvious presentation for $\pi(K_1\# K_2)$: a minimal set of meridional generators for each factor, and then identifying a meridian of $K_1$ with one of $K_2$ to form the amalgamated product.

\end{proof}

Theorem \ref{thm:nonadditivity} below proves that the meridional rank of a connected sum of $n$ 2-knots can achieve any value in between the theoretical bounds given by Proposition \ref{prop:connectsum}. Thus either the bridge number also fails to be $(-1)$-additive for these examples, or the meridional ranks of these knotted spheres are strictly less than their bridge numbers.

\nonadditivity*

\nonaddcorollary*

The lemma below is one of the more striking special cases of the theorem, from which the other cases are obtained by taking connected sums wisely.

\begin{lem}\label{lem:nonadditivity}
Let $n, m_1,\dots,m_n\in\mathbb{N}$ such that $m_i\geq 2$ are relatively prime. Let $k_1,\dots,k_n$ be 2-bridge knots, and let $K_i = \tau^{m_i} k_i$. Then $\mu(K_i)=2=\mu(K_1\#\cdots\# K_n)$.
\end{lem}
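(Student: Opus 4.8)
The plan is to separate the two bounds; the lower bound is immediate and the content lies entirely in the upper bound $\mu(K_1\#\cdots\# K_n)\le 2$. For the easy facts: each $k_i$ is $2$-bridge, so $\mu(k_i)=\beta(k_i)=2$, and since the group of a twist spin is a quotient of the classical knot group we get $\mu(K_i)\le 2$; as $k_i$ is nontrivial and $m_i\ge 2$, the $m_i$-fold cyclic branched cover has nontrivial fundamental group, so $G_i:=\pi_1(S^4\setminus K_i)$ is noncyclic and $\mu(K_i)=2$. The lower bound $\mu(K_1\#\cdots\# K_n)\ge\max_i\mu(K_i)=2$ is then Proposition~\ref{prop:connectsum}.

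For the upper bound I would first set up the algebra of the connected sum. Writing $t$ for the meridian shared by all summands, the group of the connected sum is the amalgam $G=G_1*_{\langle t\rangle}\cdots*_{\langle t\rangle}G_n$. As in the proof of Theorem~\ref{thm:twistspunrank}, the twist relation makes $t^{m_i}$ central in $G_i$, so conjugation by $t$ induces on $G_i':=[G_i,G_i]$ a monodromy automorphism $\phi_i$ with $\phi_i^{m_i}=\mathrm{id}$, and $G_i'$ is $\pi_1$ of the $m_i$-fold branched cover. A Reidemeister--Schreier computation with transversal $\{t^k\}$ then identifies the commutator subgroup of the amalgam as the free product $G'=G_1'*\cdots*G_n'$, on which $t$ acts by $\phi:=\phi_1*\cdots*\phi_n$. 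The key reduction is this: a second meridian has the form $c=gtg^{-1}$ with $g\in G'$, and $\langle t,c\rangle\cap G'$ is generated by the $\phi$-orbit of $u:=ct^{-1}=g\,\phi(g)^{-1}$; hence $\mu(G)\le 2$ if and only if there is $g\in G'$ whose coboundary $g\phi(g)^{-1}$ has $\langle t\rangle$-orbit generating the whole free product $G'$. Applying the same reduction inside each factor, $\mu(K_i)=2$ supplies $w_i\in G_i'$ with $\{w_i\phi_i^{k}(w_i)^{-1}:k\in\mathbb Z\}$ generating $G_i'$.

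Now comes the main point, where coprimality enters. I would take $g=w_1w_2\cdots w_n\in G'$. Since $\phi$ acts factorwise, the telescoping identity $\phi^0(u)\phi^1(u)\cdots\phi^{k-1}(u)=g\,\phi^k(g)^{-1}$ shows that the orbit subgroup $H:=\langle t^k u t^{-k}\rangle$ equals $\langle g\,\phi^k(g)^{-1}:k\in\mathbb Z\rangle$. Because the $m_i$ are relatively prime, the Chinese Remainder Theorem lets me choose, for each fixed $i$ and each residue $k_0$, an exponent $k$ with $k\equiv 0\ \pmod{m_j}$ for all $j\ne i$ and $k\equiv k_0\ \pmod{m_i}$. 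For such $k$ every factor except the $i$-th satisfies $\phi_j^{k}(w_j)=w_j$, so the free-product word for $g\phi^k(g)^{-1}$ collapses to $P_i\big(w_i\phi_i^{k}(w_i)^{-1}\big)P_i^{-1}$ with $P_i=w_1\cdots w_{i-1}$. Letting $k_0$ vary, these generate $P_iG_i'P_i^{-1}$, so $H\supseteq P_iG_i'P_i^{-1}$ for every $i$.

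Finally I would unlock the factors inductively. For $i=1$, $P_1=1$, so $G_1'\subseteq H$; assuming $G_1',\dots,G_{i-1}'\subseteq H$ we get $P_i=w_1\cdots w_{i-1}\in H$, whence $G_i'=P_i^{-1}(P_iG_i'P_i^{-1})P_i\subseteq H$. Thus $H\supseteq\langle G_1',\dots,G_n'\rangle=G'$, so $\langle t,c\rangle=G$ and $\mu(K_1\#\cdots\# K_n)\le 2$, giving equality. I expect the genuine obstacle to be exactly this generation step: producing a single meridian whose orbit fills out the free product $G'$, rather than merely surjecting onto each $G_i'$ (which any diagonal choice already achieves). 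The device that overcomes it is the combination of the telescoping identity with CRT-isolation of one factor at a time, followed by the inductive conjugation argument; it is the pairwise coprimality of the twist indices that makes the isolating congruences solvable.
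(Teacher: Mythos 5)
Your argument is correct, but it takes a genuinely different route from the paper's. The paper works directly with presentations: since $k_i$ is $2$-bridge, $\pi_1(S^3\setminus k_i)=\langle x_i,y_i\mid r_i\rangle$ with $x_i,y_i$ meridians, and the twist-spin relation gives $x_i^{m_i}=y_i^{m_i}$ in the group of $K_i$. Amalgamating the summands in a zig-zag fashion ($x_1=x_2$, $y_2=y_3$, and so on), these relations chain together to give $y_1^{M}=x_n^{M}$ with $M=m_1\cdots m_{n-1}$, and then B\'ezout ($aM+bm_n=1$) recovers the interior meridian as $x_n=y_1^{aM}y_n^{bm_n}$; an induction on the number of summands shows the two ``outermost'' meridians generate. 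Your proof instead passes to the commutator subgroup $G'\cong G_1'*\cdots*G_n'$ with its finite-order meridional monodromy, reduces $2$-generation to the condition that the $\langle t\rangle$-orbit of a coboundary $g\phi(g)^{-1}$ generates $G'$, and uses the Chinese Remainder Theorem to isolate one free factor at a time from the diagonal choice $g=w_1\cdots w_n$, finishing with the inductive conjugation step. Both arguments ultimately rest on the same two facts --- conjugation by the meridian has order dividing $m_i$ on the $i$-th piece, and the $m_i$ are pairwise coprime --- but they package them differently: the paper's version is shorter and stays at the level of meridian powers, while yours exhibits an explicit second meridian $c=(w_1\cdots w_n)\,t\,(w_1\cdots w_n)^{-1}$ based at the common amalgamating meridian, applies verbatim to any $2$-meridian-generated twist-spun summands (no $2$-bridge diagram is used), and connects naturally to the commutator-subgroup lower bound proved later in the paper. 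One minor caveat: the ``if and only if'' in your coboundary reduction is really just an ``if'' as written, since a generating pair of meridians need not contain $t$; but after conjugating such a pair so that one member is $t$ (all meridians of a $2$-knot are conjugate), the ``only if'' direction --- which you do need inside each factor to produce the $w_i$ --- also holds, so the argument goes through.
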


\begin{proof}
Let $\langle x_i, y_i | r_i \rangle$ be a Wirtinger presentation for $\pi k_i$, where $x_i$ and $y_i$ are meridians of $k_i$. Then a presentation for $K_i$ is obtained by adding the relation $[x_i^{m_i},y_i]=1$, or equivalently $x_i^{m_i}=y_i^{m_i}$. It is convenient to think of the group of the connected sum $K_1\# \cdots \# K_n$ as being amalgamated in a ``zig-zag'' fashion from the groups of the summands $K_i$: for $i$ odd we amalgamate $x_i$ with $x_{i+1}$, and for $i$ even we amalgamate $y_i$ with $y_{i+1}$. Then a presentation for the group of the connected sum is $\langle x_1, y_1, \dots, x_n, y_n | r_1, \dots, r_n, x_1^{m_1}=y_1^{m_1},\dots ,x_n^{m_n}=y_n^{m_n}, x_1=x_2, y_2=y_3, \dots , z_{n-1}=z_n\rangle$, where the symbol $z$ stands for $x$ if $n$ is even and $y$ if $n$ is odd.

Now we prove by induction that for $n$ even, the meridians $y_1$ and $y_n$ generate the group of $K=K_1\#\cdots\# K_n$, and that for $n$ odd, $y_1$ and $x_n$ generate the group of $K$. Since each of the groups $\pi K_i$ inject into the group of $K$, $\pi K$ is not cyclic, so we will conclude that $\mu(K)=2$.

Specifically, we will show that for $n$ even, $x_n=x_{n-1}$ is in the subgroup generated by $y_1$ and $y_n$, and for $n$ odd, $y_n=y_{n-1}$ is in the subgroup generated by $y_1$ and $x_n$. Then, by induction, the stated pairs of elements generate the group of $K$.

When $n=1$, the group of $K=K_1$ is generated by $x_1$ and $y_1$ by assumption. Now assume the claim is true for less than or equal to $n-1$ summands, and consider the case of $K=K_1\#\cdots\# K_n$. 

Let $n$ be even, $M=m_1 m_2 \cdots m_{n-1}$, and note that the twist-spin relations imply that $y_1^M=x_{n-1}^M=x_n^M$. Since $M$ is relatively prime to $m_n$, there exist integers $a$ and $b$ so that $aM+bm_n=1$. Then $y_1^{aM}y_n^{bm_n}=x_n^{aM}x_n^{bm_n}=x_n=x_{n-1}$, so this element is in $\langle y_1,y_n\rangle$. 

Now let $n$ be odd and consider $M,a,b$ as before. Then $y_1^M=y_{n-1}^M=y_n^M$, and $y_1^{aM}x_n^{bm_n}=y_n^{aM}y_n^{bm_n}=y_n=y_{n-1}$. Thus $y_1$ and $x_n$ generate $y_n=y_{n-1}$, as claimed. 


\end{proof}

In \cite{kanenobu1996weak}, Kanenobu defined a family of 2-knots in order to prove an analogous statement to Theorem \ref{thm:nonadditivity} regarding the \textit{weak unknotting number} of a 2-knot, the fewest number of meridian-identifying relations which abelianize the knot group.

For the lower bound in Theorem \ref{thm:nonadditivity}, we will again make use of Construction \ref{const:oddp}. Recall that this construction is $(-1)$-additive under connected sum (Proposition \ref{prop:kpn_additive}). For a 2-knot $K$, let $\alpha J$ denote the connected sum of $\alpha$ copies of $J$. 



\begin{defn} \label{def:kanenobu}
Let $p_1,\dots,p_n,q\geq1$ such that $max\{ p_i\} \leq q \leq \sum p_i$ and $p_1\geq p_2 \geq \dots \geq p_n$, and choose $j$ such that $\displaystyle{\sum_{i=1}^{j-1} (p_i - 1) \leq q - 1 \leq \sum_{i=1}^{j} (p_i - 1)}$. Let $m_1,\dots,m_n$ be relatively prime integers with $|m_i|\geq2$, and let $T_i=\tau^{m_i} k_i$, where $k_i\in K_{m_i,2}$, e.g.\ $k_i=T(2,2m_i-1)$. Now define

\[ K_i=\begin{cases} 
      (p_i - 1) T_1 & i< j \\
      \left(q + j - 2 -(p_1+\cdots+ p_{j-1}) \right)T_1 \# (p_j - 1) T_j
       & i=j \\
      (p_i - 1) T_i & i>j 
   \end{cases}
\]

and let $K=K_1\#\cdots\# K_n$.
\end{defn}





\begin{proof}[Proof of Theorem \ref{thm:nonadditivity}]

Given $p_1,\dots,p_n,q$ as in the statement of Theorem \ref{thm:nonadditivity}, choose a family $K_1, \dots, K_n$ as in Definition \ref{def:kanenobu}. 

Note that each $T_i$ has meridional rank 2, since $\pi(\tau^{m_i} k_i)$ has a surjection to $G_{m_i,2}$. Also $K = (q-1) T_1 \# (p_j - 1) T_j \# \cdots \# (p_n - 1) T_n$, so $\mu(K)\geq \mu((q-1) T_1) = q$.

Following Kanenobu, note that $K$ can also be written as $(q-p_j)T_1 \# (p_j-p_{j+1})(T_1\# T_j) \#$ $ (p_{j+1}-p_{j+2})(T_1 \# T_j \# T_{j+1}) \# \cdots \# (p_{n-1}-p_n)(T_1 \# T_j\#\cdots \# T_{n-1}) \# (p_n-1)(T_1\#T_j\#\cdots\# T_n)$. Each of the 2-knots $T_1\#T_j\#\cdots\# T_{j+i}$ has meridional rank 2 by Lemma \ref{lem:nonadditivity}, hence $\alpha(T_1\#T_j\#\cdots\# T_{j+i})$ has meridional rank at most $\alpha+1$. Then $K$ is a connected sum of $q-1$ 2-knots, each of meridional rank 2, so $\mu(K)$ is at most $q$, by repeated application of Proposition \ref{prop:connectsum}.


\end{proof}


\begin{rem}
The lower bound used by Kanenobu in \cite{kanenobu1996weak} is the {\em Nakanishi index}, the minimal number of generators of the Alexander module. This approach would also work here, but we prefer to use the $p$-cycle colorings developed in Construction \ref{const:oddp}, to be self-contained and also because this readily yields infinitely many families of 2-knots (for fixed parameters $p_1,\dots,p_n,q$) satisfying the theorem. 
\end{rem}

\begin{rem}
The weak unknotting number studied by Kanenobu in \cite{kanenobu1996weak} is a natural lower bound for the \textit{stabilization number}, the minimal number of 1-handle stabilizations needed to produce an unknotted surface. In \cite{JKRS}, a theorem analogous to Kanenobu's theorem and Theorem \ref{thm:nonadditivity} is proved for the the (algebraic) \textit{Casson-Whitney number} of a 2-knot, a measure of complexity regarding regular homotopies to the unknot, using the same examples. Theorem \ref{thm:nonadditivity} reproves both of these theorems, since $\mu(K)-1$ is an upper bound for these algebraic unknotting numbers. 
\end{rem}


\begin{rem}
If one starts with a connected sum as in Lemma \ref{lem:nonadditivity} with at least 2 factors and performs a stabilization to effect Kanenobu's relation, a torus $S$ with group $\Z$ is obtained; Kanenobu asks if this torus is smoothly unknotted \cite{kanenobu1996weak}. In \cite{JKRS} it is shown that $S\#T = T\# T$, where $T$ is an unknotted torus. If $S$ is smoothly knotted, and if $\beta(S)>1$, then $\beta(S)>\beta(S\# T)$, and this would be an example of bridge number collapsing. This is purely conjectural, as current tools have not been able to identify any smoothly knotted torus (or any orientable surface) with group $\Z$, and if one was identified it is also not obvious how to show that its bridge number is at least 2.
\end{rem}

One final observation is that if the bridge number does collapse in these cases, then one can find an entire Wirtinger presentation with a smaller than expected number of generators. It is not clear that such a presentation exists, as direct substitution of the smaller generating set found in Lemma \ref{lem:nonadditivity} does not yield Wirtinger relations.

\begin{quest}
Let $K$ be as in Theorem \ref{thm:nonadditivity}, with $\mu(K)=q<\sum p_i - (n-1)$. Does $K$ have a Wirtinger presentation with $q$ meridional generators?
\end{quest}

\begin{conj}
The examples in Theorem \ref{thm:nonadditivity} are counterexamples to the MRC for knotted spheres.
\end{conj}

\subsection{A lower bound from the rank of the commutator subgroup}\label{sec:rank of the commutator subgroup}
As proven in the previous section, meridional rank can behave erratically under connected sum, even staying bounded in a connected sum with arbitrarily many summands. In contrast, here we find a lower bound for the meridional rank of a connected sum of twist-spun knots, which shows that if the twist indices in a family are bounded, the meridional rank of increasingly long connected sums must increase asymptotically. The proof is inspired by the well-known argument that $\mu(K)-1$ elements always generate the Alexander module (see e.g.\ \cite{rolfsen}).

\begin{restatable}{thm}{commutatorsubgroup} \label{thm:commutator_subgroup}
Let $\{K_i\}_{i=1}^\infty$ be a collection of twist-spun 2-knots: $K_i=\tau^{m_i} k_i$ for nontrivial classical knots $k_i$, with $|m_i|\geq 2$. If $\{m_i\}_{i=1}^\infty$ is bounded, then $\displaystyle{\lim_{n\to \infty}\mu(K_1\#\cdots \# K_n)=\infty}$.
\end{restatable}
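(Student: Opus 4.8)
The plan is to identify the commutator subgroup $G_n'$ of $G_n:=\pi_1\!\big(S^4\setminus(K_1\#\cdots\#K_n)\big)$ explicitly as a free product, bound its minimal number of generators below by $n$, and then argue that a meridional generating set of size $\mu$ forces $G_n'$ to be generated by at most (a bounded constant)$\cdot\mu$ elements. First, recall Zeeman's theorem that for $|m_i|\geq 2$ the complement of $K_i=\tau^{m_i}k_i$ fibers over $S^1$ with fiber the punctured $m_i$-fold cyclic branched cover of $(S^3,k_i)$ and with monodromy of finite period $m_i$. Hence the group $G^{(i)}=\pi_1(S^4\setminus K_i)$ is a semidirect product $N_i\rtimes\langle t\rangle$, where $N_i:=\pi_1(M_{m_i})=(G^{(i)})'$ is the fundamental group of the closed branched cover $M_{m_i}$ and $t$ is a meridian; in particular $t^{m_i}$ is central in $G^{(i)}$. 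Since the connected sum of $2$-knots amalgamates the knot groups over the infinite cyclic subgroup generated by a meridian, $G_n$ is the fundamental group of a graph of groups with vertex groups $G^{(i)}$ and all edge groups $\langle t\rangle\cong\Z$.

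Because $t$ maps to a generator of $G_n/G_n'\cong\Z$, the intersection of $G_n'$ with every conjugate of an edge group is trivial, and $G_n'$ acts transitively on each orbit of vertices and of edges of the Bass--Serre tree (using $G_n=G_n'\langle t\rangle$ and $t\in\langle t\rangle\subseteq G^{(i)}$); the resulting quotient graph is a tree. The structure theorem for graphs of groups then identifies $G_n'=N_1*\cdots*N_n$ as a free product, so Grushko's theorem gives $d(G_n')=\sum_{i=1}^n d(N_i)$, where $d(\cdot)$ denotes the minimal number of generators. I will next show each $N_i$ is nontrivial, whence $d(G_n')\geq n$. If $\pi_1(M_{m_i})$ were trivial, then $M_{m_i}$ would be $S^3$ by the Poincar\'e conjecture, and the deck transformation of the branched cover would be a periodic diffeomorphism of $S^3$ whose fixed-point set is the preimage of $k_i$; by the (solved) Smith conjecture this fixed circle is unknotted, forcing $k_i$ to be trivial, contrary to hypothesis.

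I expect this nontriviality step to be the main conceptual obstacle. It cannot be detected homologically, since some of the relevant $k_i$ may have trivial Alexander module, so it genuinely requires the geometric input of the Smith and Poincar\'e theorems rather than a computation with $H_1(M_{m_i})$. The remaining bookkeeping is comparatively routine: suppose $G_n$ is generated by $\mu=\mu(K_1\#\cdots\#K_n)$ meridians $y_1,\dots,y_\mu$. Taking $\{t^k\}_{k\in\Z}$ as a Schreier transversal for $G_n'$, the Reidemeister--Schreier process exhibits $G_n'$ as generated by the elements $t^k\,(y_j t^{-1})\,t^{-k}$ over $1\leq j\leq\mu$ and $k\in\Z$. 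Conjugation by $t$ is an automorphism of $G_n'$ whose $M$-th power is the identity, because $t^M$ is central in $G_n$, where $M=\mathrm{lcm}(|m_1|,\dots,|m_n|)$ and centrality of $t^M$ follows from the centrality of each $t^{m_i}$ in $G^{(i)}$ together with the fact that the $N_i$ and $t$ generate $G_n$. Hence at most $M$ of the conjugates $t^k(y_jt^{-1})t^{-k}$ are distinct, so $d(G_n')\leq M\mu$.

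Combining the two estimates yields $n\leq d(G_n')\leq M\,\mu(K_1\#\cdots\#K_n)$. The hypothesis that $\{m_i\}$ is bounded enters precisely here: if $|m_i|\leq B$ for all $i$, then $M\mid \mathrm{lcm}(1,\dots,B)=:L$ for every $n$, so that $\mu(K_1\#\cdots\#K_n)\geq n/L\to\infty$. Without the boundedness assumption the lcm of the monodromy periods could grow as fast as $n$ and absorb the growth of $d(G_n')$, which is why that hypothesis is essential to the argument.
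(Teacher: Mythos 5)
Your proposal is correct and follows essentially the same route as the paper: both bound $\mu$ below by the rank of the commutator subgroup, identified as the free product $\pi_1(\Sigma_{m_1}k_1)*\cdots*\pi_1(\Sigma_{m_n}k_n)$ via Zeeman's fibration theorem, and both obtain the upper bound $\mathrm{rk}(G_n')\lesssim M\mu$ from the centrality of $t^M$ with $M=\mathrm{lcm}(m_1,\dots,m_n)$, which is where boundedness of $\{m_i\}$ enters. The only difference is that you supply explicitly several steps the paper leaves implicit (the Bass--Serre/Grushko justification of the free product decomposition and additivity of rank, and the Poincar\'e--Smith argument that each $\pi_1(\Sigma_{m_i}k_i)$ is nontrivial), which is a welcome amplification rather than a divergence.
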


The lower bound comes from the rank of the commutator subgroup of the group of an $m$ twist-spun knot, and the property that conjugation by a meridian has order at most $m$. Zeeman proved that $\tau^{m_i} k_i$ is fibered by $\Sigma_m k$, the $m$-fold cyclic cover of $S^3$ branched over the knot $k_i$ \cite{zeeman}.

\begin{lem}
Let $\{K_i\}_{i=1}^\infty$ be a collection of twist-spun 2-knots: $K_i=\tau^{m_i} k_i$ for nontrivial classical knots $k_i$, with $|m_i|\geq 2$, $n\geq 1$. Let $K=K_1\#\cdots \# K_n$, $M=\mathrm{lcm}(m_1,\dots,m_n)$, and $\displaystyle{N=\sum_{i=1}^n rk(\pi_1 (\Sigma_{m_i} k_i))}
$. Then $\mu(K)\geq 1+N/M$.
\end{lem}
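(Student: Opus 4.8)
The plan is to pin down the commutator subgroup of $\pi K$ together with the conjugation action of a meridian on it, and then to run the same Reidemeister--Schreier style counting that underlies Proposition \ref{prop:module}, but tracking the whole commutator subgroup rather than only its abelianization. The point of passing from the Alexander module to the commutator subgroup itself is that the branched covers $\Sigma_{m_i}k_i$ may have large fundamental group even when the Alexander module is small.

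First I would split each summand using Zeeman's theorem. For $|m_i|\geq 2$, the complement $S^4\setminus N(\tau^{m_i}k_i)$ fibers over $S^1$ with fiber $\Sigma_{m_i}k_i$ (minus an open ball) and monodromy the canonical order-$m_i$ deck transformation $\rho_i$ of the cyclic branched cover. Since $H_1(\pi K_i)=\Z$, the fibration projection is the abelianization, so its kernel is the commutator subgroup; thus $\pi K_i\cong C_i\rtimes_{\phi_i}\langle t\rangle$, where $C_i=\pi_1(\Sigma_{m_i}k_i)$, $t$ is a meridian, and $\phi_i=(\rho_i)_*$ satisfies $\phi_i^{m_i}=\mathrm{id}$ \emph{exactly} because $\rho_i$ is an honest periodic diffeomorphism of order $m_i$.

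Next I would assemble the connected sum. The group of a connected sum of $2$-knots is the free product of the summand groups amalgamated over a common meridian $\langle t\rangle$, and each amalgamating $\langle t\rangle$ maps isomorphically onto $H_1=\Z$. Applying the elementary isomorphism $(A\rtimes H)*_H(B\rtimes H)\cong(A*B)\rtimes H$ repeatedly, I get $\pi K\cong(C_1*\cdots*C_n)\rtimes_\phi\langle t\rangle$, where $\phi$ restricts to $\phi_i$ on each free factor. Hence the commutator subgroup is $C=C_1*\cdots*C_n$ and conjugation by the meridian acts as $\phi=\phi_1*\cdots*\phi_n$. Two consequences follow: $\phi^M=\mathrm{id}$ on $C$, since $\phi^M|_{C_i}=\phi_i^M=\mathrm{id}$ because $m_i\mid M$; and, by Grushko's theorem, $\mathrm{rk}(C)=\sum_i\mathrm{rk}(C_i)=N$. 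Finally I would run the counting. Choose a minimal meridional generating set $g_1,\dots,g_\mu$ with $\mu=\mu(K)$, arranged (replacing some $g_i$ by inverses) so each maps to $+1\in\Z$, put $t=g_1$ and $h_i=g_ig_1^{-1}\in C$ for $i\geq 2$. The push-the-$t$'s-to-the-right rewriting used in Proposition \ref{prop:module} shows $C$ is generated by the conjugates $\{t^{j}h_it^{-j}:2\leq i\leq\mu,\ j\in\Z\}$; but $t^{j}h_it^{-j}=\phi^{j}(h_i)$ and $\phi^M=\mathrm{id}$, so only the residues $j=0,\dots,M-1$ produce distinct generators, whence $C$ is generated by at most $M(\mu-1)$ elements. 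Combining with $\mathrm{rk}(C)=N$ gives $N\leq M(\mu-1)$, that is $\mu(K)\geq 1+N/M$.

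The main obstacle is the bookkeeping in the middle step: checking cleanly that the commutator subgroup of the amalgam really is the free product $C_1*\cdots*C_n$ and that conjugation by the meridian restricts to the original monodromy on each factor. The normal-form theorem for amalgamated products makes $\langle C_1,\dots,C_n\rangle\cong C_1*\cdots*C_n$ transparent, and the semidirect decomposition is immediate once one verifies $\langle t\rangle\cap(C_1*\cdots*C_n)=1$; the single delicate point is that each $\phi_i$ has order \emph{exactly} $m_i$ as an automorphism, so that $\phi^M=\mathrm{id}$ holds genuinely rather than merely up to an inner automorphism. This is precisely the content of the monodromy being the honest periodic deck transformation supplied by Zeeman's fibration.
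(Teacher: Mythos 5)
Your argument is correct and is essentially the paper's own proof: decompose $\pi K$ as $(C_1*\cdots*C_n)\rtimes\langle t\rangle$ with $C_i\cong\pi_1(\Sigma_{m_i}k_i)$, use Grushko to get $\mathrm{rk}(C)=N$, and bound the Schreier generators $t^jh_it^{-j}$ by $M(\mu-1)$ using that conjugation by $t^M$ is trivial on $C$. The only cosmetic difference is that the paper obtains $\phi^M=\mathrm{id}$ from the centrality of $x^{m_i}$ in each factor (visible directly from the twist-spun presentation $x^{-m}yx^m=y$) rather than from periodicity of the monodromy, which sidesteps the ``exact order vs.\ up to inner automorphism'' point you flag.
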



\begin{proof}
For clarity, we first prove the statement in the case that the number of summands $n$ is equal to $1$, i.e.\ that $K=\tau^m k$ and $\mu(K)\geq 1 + \frac{rk(\pi_1(\Sigma_m k))}{m}$.

Suppose that meridians $x_1,...,x_k$ generate $G = \pi_1(S^4\backslash K)$. Denote by $C$ the commutator subgroup of $\pi_1(S^4\backslash K)$. Since $\tau^m k$ is fibered by $\Sigma_m k$, $C\cong\pi_1(\Sigma_m k)$. Recall that for any orientable surface knot group $G$ with commutator subgroup $C$, the abelianization short exact sequence $1\rightarrow C \rightarrow G \rightarrow \Z \rightarrow 1$ is split-exact: a splitting is provided by sending $1\in\Z$ to a meridian of $G$. So we can regard $G$ as a semidirect product: $G\cong C \rtimes \langle x_1 \rangle$. Then each $x_i=x_1 c_i$, for some $c_i\in C$.

Now, letting $x$ denote $x_1$, we have $G=\langle x, xc_2, \dots, xc_n\rangle = \langle x, c_2, \dots, c_n\rangle$. Let $c\in C$. Then $c=w(x,c_2,\dots,c_n)$, i.e.\ $c$ can be written as a word in these generators. Notice that this word must have an exponent sum of zero for all of its $x$ terms, since otherwise it is nontrivial in the abelianization. This means that $c=w^\prime(x^{-j}c_ix^j:j\geq 0, 2\leq i\leq k)$, however since $x^m$ is central in $G$, we need only consider $0\leq j \leq m-1$. Therefore $C=\langle x^{-j}c_ix^j:0\leq j\leq m-1, 2\leq i \leq k \rangle$, and $rk(C)\leq m(k-1)$. Taking $k$ to be minimal and rearranging, we get the desired inequality.



For the general case, the adaptation is to replace $m$ with $M=\mathrm{lcm}\{m_1,\dots,m_n\}$. Note that the commutator subgroup of the knot group of a connected sum is the free product of the individual commutator subgroups. Therefore $C\cong C_1*\cdots * C_n$, where $C_i\cong \Sigma_{m_i}k_i$ is the commutator subgroup of $K_i=\tau^{m_i}k_i$. If $c\in C$ is nontrivial, then $c$ can be written as a product $c=z_1\cdots z_\ell$, where each $z_j$ is nontrivial and in exactly one of the $C_i$'s. Then $x^{-M}cx^M = c$, because $x^{-M}z_j x^M =z_j$ for each $j$. Following the previous argument, $C=\langle x^{-j}c_ix^j:0\leq j\leq M-1, 2\leq i \leq k \rangle$, and $rk(C)= rk(C_1)+\dots+rk(C_n)\leq M(k-1)$. Since $rk(\pi_1(\Sigma_{m_i} k_i))\geq 1$, we see that $\mu(K)\geq 1+N/M$, which approaches infinity with $n$ when e.g.\ $M$ is finite.
\end{proof}

The proof of the theorem then follows from noticing that $N\geq n$, since $\pi K_i\ncong \Z$ and therefore $C_i\neq 1$. This proves that to exhibit the extreme behavior in Lemma \ref{lem:nonadditivity}, it was really necessary for the twist-indices to become arbitrarily large.

Weidmann proved that for a connected sum of $n$ nontrivial classical knots, the rank of the knot group, and therefore the meridional rank, is at least $n+1$ \cite{weidmann}. An application of Theorem \ref{thm:nonadditivity} to $\tau^m(k_1\#\cdots\# k_n)=\tau^mk_1\#\cdots\# \tau^m k_n$ yields the following corollary, which gives an analogous version for the twist spin of a connected sum.


\begin{cor} \label{cor:weidmann}
Let $k_1,\dots,k_n$ be nontrivial classical knots and $|m|\geq2$. Then \begin{center}$1+n/m\leq \mu(\tau^m(k_1\#\cdots\# k_n))\leq \mu(k_1\#\cdots\#k_n)$.
\end{center}
\end{cor}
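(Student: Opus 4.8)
The plan is to derive both inequalities by assembling results already in hand, using the identity $\tau^m(k_1\#\cdots\# k_n)=\tau^m k_1\#\cdots\# \tau^m k_n$ to move between the twist spin of a connected sum and the connected sum of the individual twist spins. Since the relation $x_1^{-m}x_i x_1^m=x_i$ defining a twist-spun knot group is insensitive to the sign of $m$, the groups of $\tau^m k$ and $\tau^{-m}k$ agree, so I may assume $m>0$ throughout and read $n/m$ as $n/|m|$.

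For the upper bound I would invoke the observation from the introduction that the group of a twist-spun knot is a quotient of the underlying classical knot group, giving $\mu(\tau^m K)\le\mu(K)$ for every classical $K$. Applying this with $K=k_1\#\cdots\# k_n$ immediately yields $\mu(\tau^m(k_1\#\cdots\# k_n))\le\mu(k_1\#\cdots\# k_n)$, with nothing further to check.

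For the lower bound I would apply the preceding commutator-subgroup Lemma to the connected sum $\tau^m k_1\#\cdots\# \tau^m k_n$, in which every twist index is equal to $m$. In the notation of that Lemma this forces $M=\mathrm{lcm}(m,\dots,m)=m$ and $N=\sum_{i=1}^n rk(\pi_1(\Sigma_m k_i))$. Because each $k_i$ is a nontrivial knot, its $m$-fold cyclic branched cover $\Sigma_m k_i$ has nontrivial fundamental group, so each summand of $N$ is at least $1$ and hence $N\ge n$. The Lemma then gives $\mu(\tau^m(k_1\#\cdots\# k_n))\ge 1+N/M\ge 1+n/m$, as required.

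The argument is a direct bookkeeping synthesis rather than a new computation, so I do not expect a serious obstacle. The only point needing care is the strictness $rk(\pi_1(\Sigma_m k_i))\ge1$: this is exactly the fact, already recorded after the Lemma, that the commutator subgroup $C_i$ of a twist-spun knot group is nontrivial because $\pi K_i\ncong\Z$ for nontrivial $k_i$. With that in place, and with the connected-sum/twist-spin identity, both inequalities follow at once.
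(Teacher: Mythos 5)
Your proof is correct and follows essentially the same route as the paper: the upper bound comes from the twist-spun group being a meridian-preserving quotient of the classical knot group, and the lower bound comes from applying the commutator-subgroup lemma to $\tau^m k_1\#\cdots\#\tau^m k_n$ with $M=m$ and $N\geq n$ because each $\Sigma_m k_i$ has nontrivial fundamental group. The paper's stated justification is exactly this application together with the identity $\tau^m(k_1\#\cdots\# k_n)=\tau^m k_1\#\cdots\#\tau^m k_n$, so there is nothing to add.
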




\section{Welded Knots and Ribbon Tori}\label{sec:weldedmrkconj}

In this section, we investigate the meridional rank and bridge number of ribbon $T^2$-knots. We begin by reminding the readers of a convenient way to represent ribbon tori with generalized knot diagrams. A small circle around a double point is called a {\bf virtual crossing}.

\subsection{Background on virtual and welded knots}

A \textit{virtual knot diagram} is an immersion of circles into the plane where each double point is decorated as either a classical crossing or a virtual crossing. A \textbf{virtual knot} (resp. a \textbf{welded knot}) is an equivalence class of
virtual knot diagrams modulo planar isotopies and \textit{extended Reidemeister moves}, which are depicted in Figure 2 in \cite{satoh2000virtual} where move $D$ is not allowed (resp. $D$ is allowed). A topological interpretation relevant to this paper is presented next.

\subsubsection{Ribbon surfaces}
An orientable knotted surface $S\subset S^4$ is \textit{ribbon} if it bounds an immersed handlebody in $S^4$ with only ribbon intersections. A {\it ribbon intersection} is a disk in $S^4$ which is the image of a pair of disks in the handlebody, one properly embedded in the handlebody (so that its boundary lies on $S$) and one embedded in the interior of the handlebody. A neighborhood of the latter disk inside the handlebody can be pushed into $B^5$ to obtain an embedded handlebody in $B^5$ with boundary $S$ and with only index 0 and 1 critical points.

\subsubsection{The Tube map}
Satoh defined the \textbf{Tube map} in \cite{satoh2000virtual}, which takes a virtual knot (arc) diagram to a diagram of a ribbon torus (sphere), and proved that this respects the virtual Reidemeister moves and the welded move, i.e.\ if two diagrams are welded equivalent, then their tubes are isotopic. Moreover, he proved the Tube map is surjective, i.e.\ every ribbon torus or sphere is the tube of a welded knot or arc. The tube map gives a convenient combinatorial way to encode ribbon surfaces, which is compatible with bridge trisections.

\subsubsection{Bridge numbers of virtual and welded knots}

Virtual and welded bridge numbers have been studied in \cite{boden2015bridge} and the references therein. The definitions for $\beta_O(K),\beta_{\mathbb{R}^2}(K),$ and $\omega(K)$ are identical to the ones in Sections \ref{overpass perspective}, \ref{plane perspective}, and \ref{wirtinger perspective} except we replace every instance of "a classical knot diagram $D$" appearing in those definitions to "a virtual knot diagram $D$". Similarly, if $K$ is a welded knot, then we minimize $\beta_O,\beta_{\mathbb{R}^2}$ and $\omega$ over welded equivalent diagrams representing $K$. Due to Nakanishi and Satoh, $\beta_O(K) \neq \beta_{\mathbb{R}^2}(K)$ for virtual knots, but the two quantities are equivalent for welded knots \cite{nakanishi2015two}.

To demonstrate the definitions on virtual knot diagrams, consider a diagram $D$ representing $K$ in Figure \ref{fig:demonstrate}. Note that $\beta_{\mathbb{R}^2}(D) = 3$ since $D$ has three minima with respect to the standard height function on the page. The overpasses of $D$ are represented by bold segments in the rightmost picture of Figure \ref{fig:demonstrate} so $\beta_O(D) = 2.$ Finally, observe that $\omega(D) = 1$ as $D$ is 1-colorable, with the seed indicated in the leftmost picture of Figure \ref{fig:demonstrate}. This demonstrates the effectiveness of the Wirtinger number. The main result of \cite{pongtanapaisan2019wirtinger} shows that $\omega(K) = \beta_O(K)$ for virtual knots. Using a very similar argument, one can deduce that $\omega(K) = \beta_O(K)$ for welded knots as well.

\begin{figure}
    \centering
    \includegraphics[width=.7\textwidth]{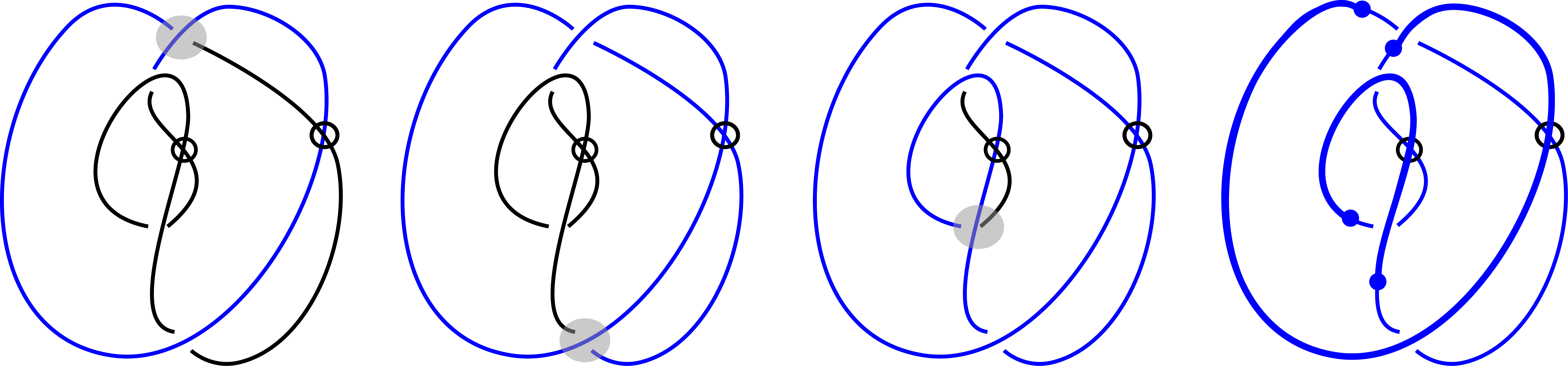}
    \caption{A virtual knot diagram $D$ with $\omega(D) = 1, \beta_O(D) = 2,$ and $\beta_{\mathbb{R}^2}(D) = 3.$ The diagram $D$ represents a nontrivial virtual knot, but a welded trivial knot.}
    \label{fig:demonstrate}
\end{figure}

We are now ready for the construction of ribbon tori whose meridional rank equals the bridge number.

\subsection{BBKM knot diagrams with virtual crossings}

Inspired by ideas from \cite{baader2020coxeter,baader2020bridge}, we will provide families of ribbon $T^2$-knot $F$ such that $\mu(F) = \beta(F).$ We begin by defining some local moves on virtual knot diagrams, which will play an important role in our theorems. We call a replacement of a classical crossing by a virtual crossing a \textit{virtualization}. The \textit{flank-switch move} is shown in Figure \ref{fig:extendflanking}, and the \textit{flank move} is simply the flank-switch move, but we do not switch the classical crossing. Observe that if one performs virtualizations on two adjacent crossings in a twist region, one can remove two newly created virtual crossings by the virtual analog of the Reidemeister II move. Similarly, if one performs two flank-type moves on two adjacent crossings in a twist region, one can use the virtual Reidemeister II move to get rid of two virtual crossings, resulting in two classical crossings in between two virtual crossings. For the bridge number computations, it is also convenient to notice that the Wirtinger number does not increase after the flank-switch and the flank moves are performed.

\begin{lem}
Suppose that $D'$ is obtained by performing some virtualizations, flank moves, and flank-switch moves on a classical knot diagram $D$ , then $\omega(D') \leq \omega(D).$
\end{lem}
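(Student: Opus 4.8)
The plan is to isolate a single combinatorial principle about colorings and apply it to each of the three move types. Since $D'$ is obtained from $D$ by a finite sequence of virtualizations, flank moves, and flank-switch moves, it suffices by induction to prove that one such move does not increase the Wirtinger number, and then chain the resulting inequalities.

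The engine is the following observation about what happens to a coloring when strands are merged. Recall that for a fixed diagram, $\omega$ is the minimum size of a seed set whose coloring moves propagate to all strands. Suppose the strands of $D'$ are obtained from those of $D$ by a surjection $q$ that identifies certain strands, and suppose that every coloring move available in $D$ remains available in $D'$ after applying $q$, or else becomes vacuous because the two strands it relates have been identified. Then, starting from an optimal seed set $S$ for $D$ with $|S|=\omega(D)$, the image $q(S)$ is a seed set for $D'$: running the propagation that colors $D$, each coloring move pushes forward to a coloring move in $D'$, so $q(S)$ colors all of $D'$. Since $q$ is surjective and $|q(S)|\le|S|$, this gives $\omega(D')\le\omega(D)$.

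It then remains to check that each elementary move fits this template. For a virtualization at a classical crossing $c$, the understrand is broken in $D$ into an incoming arc $a$ and an outgoing arc $b$, while the overstrand passes through unbroken; after virtualizing, $c$ no longer breaks any strand, so $a$ and $b$ merge into one strand and all other strands are unchanged. Here $q$ identifies $a$ with $b$. The only coloring relation lost is the one at $c$, which related exactly $a$ and $b$ (via the overstrand); since $a$ and $b$ are identified in $D'$, that move is vacuous there, and every other coloring move transfers verbatim, so the template applies. For the flank and flank-switch moves I would read off from Figure~\ref{fig:twoflankmoves} that the move \emph{relocates} an existing classical crossing, preserving its over/under coloring relation (now recorded on the possibly-merged strands), while the newly inserted virtual crossings only merge strands. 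Thus again $q$ is a strand-merge, no coloring relation is lost except between identified strands, and $\omega(D')\le\omega(D)$ follows; this recovers the earlier observation that the Wirtinger number does not increase under flank-type moves.

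The main obstacle is the bookkeeping at the flank and flank-switch moves: I must verify from the local pictures that the net effect on the strand set is \emph{exactly} a merge together with a relocation—not a loss—of the classical coloring relation, so that no genuinely new coloring relation is demanded in $D'$ that was unavailable in $D$. The virtualization case is the clean prototype, and the flank cases are minor variants of it; once the local strand identifications are pinned down, transfer of the propagation order across $q$ is automatic.
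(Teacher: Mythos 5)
Your argument is correct and matches the paper's proof in essence: both take an optimal seed set for $D$ and push its coloring sequence through each local modification, with virtualization rendering the coloring move at that crossing vacuous (the two understrand arcs merge into one strand) and the flank-type moves still admitting local coloring moves that propagate across the inserted virtual tangle, exactly as the paper checks via its Figure on extending the coloring after a flank-switch. The only cosmetic difference is your packaging of this as a general strand-surjection template; note that inserting virtual crossings does not by itself merge strands (only removing a classical crossing does), so in the flank cases the strand correspondence is essentially a bijection and your template applies even more directly than your description suggests.
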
\label{lem:virtualflankwirtinger}
\begin{proof}
There is a moment during the coloring sequence such that the overstrand at a crossings $c$ of $D$ is colored and the incoming strand is colored. The coloring move can be performed to extend the coloring to the outgoing strand. We break into two cases.

\textbf{Case 1:} If $c$ is replaced by a virtual crossing, one can simply omit this coloring move at $c$ from the coloring sequence on the virtualized diagram. This shows that the seeds for $D$ persists as seeds for $D$ after some classical crossings have been virtualized.

\textbf{Case 2:} If $c$ is replaced by virtual 2-string tangles in the definitions of the flank-switch move and the flank move, then the strands touching the northeast and the northwest endpoints of the virtual tangle are colored blue. The coloring moves can still be performed to extend the coloring to all strands in this virtual 2-string tangle. Figure \ref{fig:extendflanking} illustrates this behavior when one flank-switch move is performed.

In conclusion, $\omega(D') \leq \omega(D)$ as claimed.
\end{proof}

\begin{figure}[!ht]
\labellist
\small\hair 2pt
\centering

\pinlabel \tiny\text{$x$} at 296 15
\pinlabel \tiny\text{$y$} at 51 15
\pinlabel \tiny\text{$yxy$} at 20 304

\pinlabel \tiny\text{$x$} at 1596 15
\pinlabel \tiny\text{$y$} at 1351 15
\pinlabel \tiny\text{$yxy$} at 1320 304

\endlabellist
\includegraphics[width=0.7\textwidth]{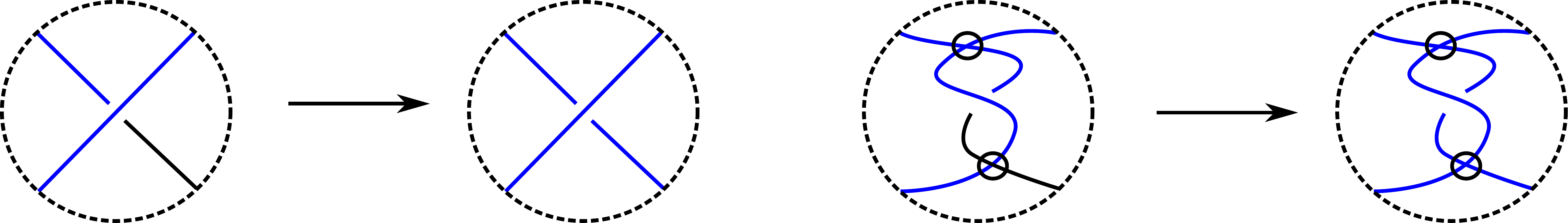}
  \caption{Performing a coloring move after a flank-switch move is performed.}
  \label{fig:extendflanking}
\end{figure}

 Classical BBKM knots are defined in Section \ref{sec:bbkm}. For the remaining of this section, $F$ will be a ribbon $T^2$-knot admitting a virtual knot diagram that is obtained by performing  moves on a classical BBKM knot diagram following the instructions in Theorem \ref{thm:virtualbbkm}. 

\begin{proof}[Proof of Theorem \ref{thm:virtualbbkm}]

(1) For the case of a single Coxeter generator in a twist region, virtualizing any number of them will preserve the labeling since all strands in the twist region are labeled by the same label. Now, we consider the case of two Coxeter generators in a twist region. Suppose that the two bottom endpoints of a vertical twist region is labeled with two distinct meridians $x$ on the left and $y$ on the right. \\
\textbf{Case 1:} Suppose an odd number of virtualizations are performed. At the top of the tangle from left to right is either $(xy)^{2k}x$ and $(xy)^{2k+1}x$ or $(xy)^{2k+1}y$ and $(xy)^{2k}y$. Since we want the bottom label and the top label on the same side to be equal, we have that $(xy)^{2k} = 1$ and $(xy)^{2k+2} = 1$. This means that $(xy)^2 = 1$ and therefore we can let the Coxeter weight corresponding to the virtualized twist region be 2. \\
\textbf{Case 2:} Suppose an even number of virtualizations are performed. At the top of the tangle from left to right is either $(xy)^{2k}x$ and $(xy)^{2k-1}x$ or $(xy)^{2k}y$ and $(xy)^{2k-1}y$. Since we want the bottom label and the top label on the same side to be equal, we have that $(xy)^{2k} = 1$ and we can let the Coxeter weight corresponding to the virtualized twist region be 2$k$.\\

(2) The flank-switch move preserves the original labeling as shown in Figure \ref{fig:extendflanking}.\\

(3) Note that each time one component of the integral tangle goes under the other component, the labeling of the two sides of the bigon after that crossing have exponents that jump from two sides of the bigon before that crossing by a certain interval $r$. After one instance of flanking, the exponents of $(ab)$ will drop by $r$. Adding a new classical crossing will cancel out this drop. Figure \ref{fig:labelrattang} demonstrates a specific example $r=18$ with the jumps in the exponent shown.
\end{proof}

\begin{figure}
\labellist
\small\hair 2pt

\pinlabel \tiny\text{$(ba)^{17}b$} at -44 29

\pinlabel \tiny\text{$(ba)^{10}b$} at 74 179

\pinlabel \tiny\text{$(ba)^3b$} at -74 329
\pinlabel \tiny\text{$(ba)^{4}b$} at 326 429
\pinlabel \tiny\text{$(ba)^{28}b$} at 856 339
\pinlabel \tiny\text{$(ba)^{46}b$} at 1056 380
\pinlabel \tiny\text{$(ba)^{28}b$} at 1299 377
\pinlabel \tiny\text{$(ba)^{10}b$} at 1579 380
\pinlabel \tiny\text{$a$} at -4 790
\pinlabel \tiny\text{$b$} at 174 780
\pinlabel \tiny\text{$(ba)^{28}b$} at 1799 780

\endlabellist
 \centering
    \includegraphics[width=.5\textwidth]{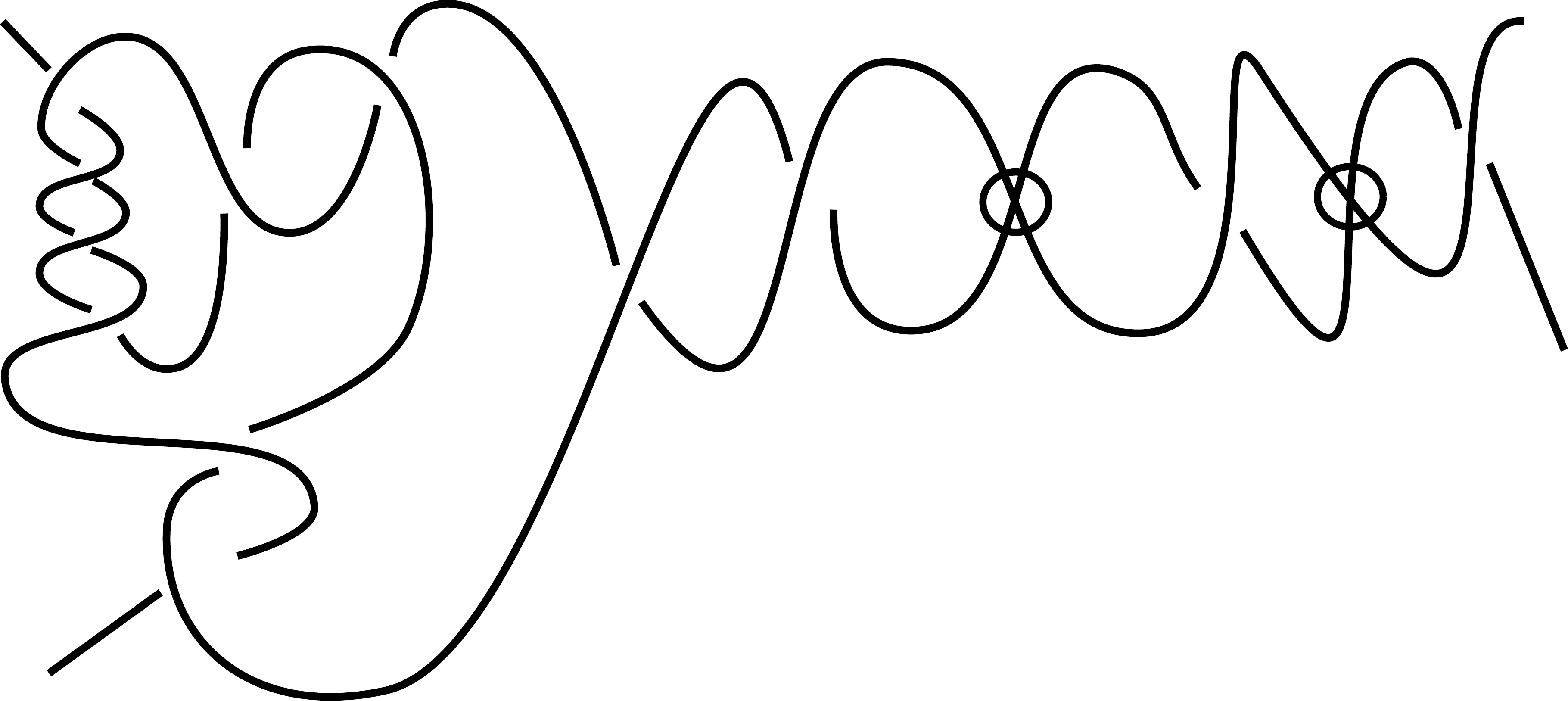}
    \caption{Flanking and balancing out in a rational tangle.}
    \label{fig:labelrattang}
\end{figure}

\VBBKM*

\begin{proof}
    Starting with a classical BBKM knot $K$ with $\mu(K)=n$ represented as a diagram $D$. Theorem \ref{thm:virtualbbkm} shows that performing the flank-switch moves, flank moves, and virtualization moves as instructed gives a virtual knot diagram $D'$ such that $\mu(Tube(D'))\geq n$.

    For the matching upper bound, Lemma \ref{lem:virtualflankwirtinger} says that the same seeds that give rise to $\omega(D)=n$ persist to be seeds for the virtual knot diagram $D'.$ By Theorem 3.3 of \cite{pongtanapaisan2019wirtinger}, $D'$ admits a diagram with $n$ overpasses. By Theorem 1.2 of \cite{nakanishi2015two}, there are local moves that one can perform to transform $D'$ to a diagram $D''$ with $n$ minima with respect to the standard height function on $\mathbb{R}^2$. Furthermore, one can require that Tube($D''$) is isotopic to Tube$(D')$ after the moves. These $n$ minima gives rise to $n$ 0-handles for the ribbon surface Tube$(D') = F$. Thus, $\beta(F) \leq n$. To see an illustration of the correspondence, the readers can consult Figure 9 of \cite{JMMZ}, where each maximum corresponds to an unknot component in a band diagram presentation of a knotted surface. Since each 0-handle corresponds to an unknot component and each 1-handle corresponds to a band, the corollary is proved.
\end{proof}

\section{Applications to Bridge Trisections}\label{sec:bridgetrisections}

In this section we collect the applications of our results to bridge trisections. Indeed, the use of meridional rank in \cite{meier2017bridge} was one of the primary inspirations for this work.

\subsection{Bridge trisections of knotted surfaces in $S^4$}

 In \cite{meier2017bridge}, Meier and Zupan showed that any knotted surface can be decomposed into a union of three \textit{trivial disk systems}, called a \textit{bridge trisection}. Bridge trisections have 4 parameters $(b;c_1,c_2,c_3)$, satisfying $\chi(S)=c_1+c_2+c_3-b$. In this section we point out that $\beta(S)=\min\{c_i\}$, hence bridge number provides the lower bound $3\beta(S)-\chi(S)\leq b(S)$ for bridge trisection index.

A $c$-component $D^2$-tangle is often simply referred to as a trivial \textit{$c$-disk system}. A \textit{$(b;c_1,c_2,c_3)$-bridge trisection} of a knotted surface $S \subset S^4$ is a decomposition $(S^4,S) = (X_1,\mathcal{D}_1) \cup (X_2,\mathcal{D}_2) \cup (X_3,\mathcal{D}_3)$ satisfying the following properties:
\begin{enumerate}
    \item $(X_i,\mathcal{D}_i)$ is a trivial $c_i$-disk system.
    \item $(\beta_{ij},\alpha_{ij})$ = $(X_i,\mathcal{D}_i) \cap (X_j,\mathcal{D}_j)$ is a trivial $b$-strand tangle.
    \item $(\Sigma,\mathbf{p}) = (X_1,\mathcal{D}_1) \cap (X_2,\mathcal{D}_2) \cap (X_3,\mathcal{D}_3)$ is a 2-sphere, and $\mathbf{p}$ is a collection of $2b$ points contained in $\Sigma.$
\end{enumerate}

The \textbf{bridge trisection index} $b(S)$ of a knotted surface $S$ in $S^4$ is defined to be the minimum number $b$ such that $S$ admits a $(b;c_1,c_2,c_3)$-bridge trisection. The quantity $\min\{c_i\}$ is called the \textbf{patch number}, because it is the minimal number of disks, or patches, in one of the disk systems.



\begin{prop}
Let $S$ be a knotted surface. Then $\beta(S)=\min\{c_1,c_2,c_3\}$, where the minimum is taken over all $(b;c_1,c_2,c_3)$-bridge trisections of $S$.
\end{prop}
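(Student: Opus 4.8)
The plan is to prove the two inequalities $\beta(S)\le\min\{c_1,c_2,c_3\}$ for every bridge trisection and $\min\{c_1,c_2,c_3\}\le\beta(S)$ for a suitably chosen trisection; together these give the claimed equality. Both directions amount to translating between the combinatorial trisection data and a height (Morse) function on $(S^4,S)$, so I would first fix the genus-$0$ trisection $S^4=X_1\cup X_2\cup X_3$ (each $X_i$ a $4$-ball, with triple intersection the central sphere $\Sigma$) and recall that it is realized by a Morse function in which the three pieces play cyclically symmetric roles.

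For $\beta(S)\le\min\{c_i\}$, I would take an arbitrary $(b;c_1,c_2,c_3)$-bridge trisection and, using the cyclic symmetry, relabel so that the disk system with the fewest patches, say $\mathcal{D}_1$, sits at the bottom. Each of its $c_1$ disks is boundary-parallel, so I isotope it to a single cup with one index-$0$ critical point whose boundary circle lies at the central level. The two remaining (also boundary-parallel) disk systems can then be isotoped to contribute only saddles along the bridge arcs of the tangles $\alpha_{ij}$ and caps (maxima) at the top, introducing no new minima. This is consistent with the Euler-characteristic count $\chi(S)=c_1+c_2+c_3-b$: writing $m_0,m_1,m_2$ for the numbers of minima, saddles and maxima, one realizes $m_0=c_1$ with $m_0-m_1+m_2=\chi(S)$, the saddle count $m_1$ being nonnegative because each boundary circle of a disk system meets both adjacent tangles, forcing $c_j\le b$ for every $j$. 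The resulting embedding has exactly $\min\{c_i\}$ minima.

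For the reverse inequality I would begin with an embedding realizing $\beta(S)$ minima and isotope $S$ into bridge position with respect to the height function, so that all minima lie below all saddles, which in turn lie below all maxima; this rearrangement can be effected by an ambient isotopy and does not change the number of index-$0$ critical points. Feeding the resulting banded-unlink normal form into the construction of Meier and Zupan produces a bridge trisection in which one disk system is cut out precisely by the $\beta(S)$ cups, so that its patch count, and hence $\min\{c_i\}$ for that trisection, is at most $\beta(S)$.

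I expect the second direction to be the main obstacle: one must verify that the trisection built from a minimal bridge position really has a disk system with exactly $\beta(S)$ patches, rather than requiring a stabilization that would inflate some $c_i$. I would handle this by appealing to Meier and Zupan's explicit correspondence between surfaces in bridge position and bridge trisections, checking that the index-$0$ critical points are exactly the patches of one disk system and that the passage to bridge position creates no extra minima.
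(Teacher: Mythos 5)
Your proposal is correct and follows essentially the same route as the paper: one direction by realizing a given trisection as a Morse embedding with $c_i$ minima, $b-c_j$ saddles and $c_k$ maxima (which the paper simply cites from Meier--Zupan, while you sketch the isotopy), and the other by index-ordering a minimal embedding, passing to the banded unlink, and invoking Meier--Zupan's banded bridge splitting construction, which preserves the number of unlink components so that one disk system has exactly $\beta(S)$ patches. The subtlety you flag at the end is precisely the point the paper addresses by noting that the isotopy into a banded bridge splitting does not change the number of components.
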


\begin{proof}
If $S$ admits a $(b;c_1,c_2,c_3)$-bridge trisection, then Meier-Zupan show in \cite{meier2017bridge} that $S$ has an embedding with $c_i$ minima, $b-c_j$ saddles, and $c_k$ maxima, for any choice of distinct $i,j,k$. Thus $S$ has an embedding with $\min\{c_i\}$ minima, and $\beta(S)\leq \min\{c_i\}$.

For the reverse direction, start with an embedding of $S$ in Morse position with $\beta=\beta(S)$ minima. Note that $S$ can be isotoped so that its critical points are index-ordered, without changing how many critical points of each index exist. Slicing $S^4$ just above the minima is then an unlink $L$ with $\beta$ components, and the index 1 critical points give rise to a set of bands $b$ such that surgering $L$ along $b$ results in the unlink just below all the maxima. See for instance \cite{kawauchi2012survey}. As in \cite{meier2017bridge}, this banded unlink $(L,b)$ can then be isotoped into a banded bridge splitting, again without changing the number of components. The banded bridge splitting induces a $(b;c_1,c_2,c_3)$ bridge trisection, where $c_1=\beta$. Thus $\beta\geq \min\{c_i\}$.

\end{proof}

\begin{cor}
Let $S$ be a knotted surface. Then $3\beta(S)-\chi(S)\leq b(S)$.
\end{cor}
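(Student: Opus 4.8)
The plan is to read the inequality off directly from the preceding proposition together with the Euler-characteristic constraint on the parameters of a bridge trisection. First I would fix a bridge trisection of $S$ that realizes the index, that is, a $(b;c_1,c_2,c_3)$-bridge trisection with $b=b(S)$. For this particular trisection the defining relation $\chi(S)=c_1+c_2+c_3-b(S)$ holds, so it suffices to bound $c_1+c_2+c_3$ from below in terms of $\beta(S)$.

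The key step is to invoke the easy half of the previous proposition: for \emph{any} $(b;c_1,c_2,c_3)$-bridge trisection one has $\beta(S)\leq\min\{c_1,c_2,c_3\}$, because such a trisection yields an embedding of $S$ with $\min\{c_i\}$ minima. Applied to the fixed minimal-index trisection, this gives $\beta(S)\leq c_i$ for each $i$. Summing these three inequalities yields $3\beta(S)\leq c_1+c_2+c_3$.

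Finally I would substitute the relation $c_1+c_2+c_3=\chi(S)+b(S)$ to obtain $3\beta(S)\leq\chi(S)+b(S)$, which rearranges to the claimed bound $3\beta(S)-\chi(S)\leq b(S)$. There is essentially no obstacle here: the argument is a one-line consequence of the proposition, and it is worth noting that it uses only the inequality $\beta(S)\leq\min\{c_i\}$ valid for every trisection, not the sharper equality $\beta(S)=\min\{c_i\}$ minimized over all trisections. The single point requiring care is to apply the bound to a trisection that actually achieves $b(S)$, so that the Euler-characteristic relation is evaluated at $b=b(S)$ and the substitution in the last step is legitimate.
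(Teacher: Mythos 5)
Your argument is correct and matches the paper's proof exactly: the paper also combines the inequality $\beta(S)\leq\min\{c_1,c_2,c_3\}$ (valid for any bridge trisection, as established in the preceding proposition) with the relation $\chi(S)=c_1+c_2+c_3-b$ applied to a trisection realizing $b(S)$. Your observation that only the easy inequality, not the full minimizing equality, is needed is accurate and consistent with how the paper uses it.
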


\begin{proof}
Combine $\beta(S)\leq\min\{c_i\}$ and the Euler characteristic calculation $\chi(S)=c_1+c_2+c_3-b$.
\end{proof}

As expected, in all the cases that we can prove $\mu(S)=\beta(S)$, we can construct a balanced $(3\beta-\chi(S);\beta)$-bridge trisection.

\begin{quest}
Does there exist a surface $S$ with $b(S)>3\beta(S)-\chi(S)$?
\end{quest}

In the case of twist-spun 2-knots, this is Question 5.2 of \cite{meier2017bridge}, which we partially answer with Theorem \ref{thm:twistspunrank}. 

\begin{cor}[of Theorem \ref{thm:twistspunrank}]
Let $S$ be a surface $\tau^n K \#^m\R P^2$, satisfying the hypotheses of Theorem \ref{thm:twistspunrank}. Then $b(S)=3\beta(S)-\chi(S)$.
\end{cor}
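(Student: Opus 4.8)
The plan is to establish the inequality $b(S) \leq 3\beta(S) - \chi(S)$ explicitly, which combined with the already-proved Corollary $3\beta(S)-\chi(S) \leq b(S)$ will yield the desired equality. Since the surfaces $S = \tau^n K \#^m \mathbb{R}P^2$ satisfy the hypotheses of Theorem \ref{thm:twistspunrank}, we know $\mu(S) = \beta(S) = \beta(K)$. The key idea is that whenever we have a tight control on the bridge number via the meridional rank, we can construct an explicit \emph{balanced} bridge trisection realizing $\min\{c_i\} = \beta(S)$ in all three disk systems, that is, a $(b; \beta, \beta, \beta)$-bridge trisection.

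First I would recall, as noted in the remark immediately preceding this corollary, that in all cases where we prove $\mu(S) = \beta(S)$ we can construct a balanced $(3\beta - \chi(S); \beta)$-bridge trisection. So the main step is to exhibit such a balanced trisection for $S = \tau^n K \#^m \mathbb{R}P^2$. I would start from the Morse-position embedding of $S$ with exactly $\beta(S) = \beta(K)$ minima, which exists by Theorem \ref{thm:twistspunrank}, and follow the procedure in the proof of the Proposition relating $\beta(S)$ to $\min\{c_i\}$: index-order the critical points, slice just above the minima to get an unlink $L$ with $\beta$ components, record the bands $b$ coming from the saddles, and isotope the resulting banded unlink into a banded bridge splitting. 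This induces a $(b; c_1, c_2, c_3)$-bridge trisection with $c_1 = \beta$.

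The content of the corollary is that this trisection can be arranged to be balanced, so that $c_1 = c_2 = c_3 = \beta$ and hence $b = 3\beta - \chi(S)$ by the Euler characteristic relation $\chi(S) = c_1 + c_2 + c_3 - b$. To see this, I would use the symmetry of twist-spun knots: a twist-spin $\tau^n K$ admits an embedding whose maxima and minima are organized symmetrically, so the number of maxima can also be taken to equal $\beta(K)$, giving $c_3 = \beta$ as well. For the middle disk system $c_2$, I would invoke the fact that $\mu(S) = \beta$ constrains the combinatorics: since the meridional rank lower bound is sharp, the bridge trisection diagram can be built from exactly $\beta$ patches in each sector, matching the Coxeter (respectively alternating) quotient structure used in the proof of Theorem \ref{thm:twistspunrank}. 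The connected-sum summands $\mathbb{R}P^2$ each contribute one minimum and can be absorbed without increasing any $c_i$ beyond $\beta$, using compatibility of the construction with connected sum.

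The hard part will be verifying that the middle parameter $c_2$ can genuinely be taken equal to $\beta$ rather than merely bounded below by it; balancing the third disk system requires that the explicit banded-unlink presentation coming from the twist-spin construction is genuinely symmetric under the trisection's cyclic $\mathbb{Z}/3$ structure, and that adding the projective-plane summands does not force extra disks in the middle sector. Once the balanced $(3\beta - \chi(S); \beta, \beta, \beta)$-bridge trisection is in hand, the Euler characteristic computation gives $b \leq 3\beta - \chi(S)$ immediately, and together with the Corollary stating $3\beta(S) - \chi(S) \leq b(S)$ this proves $b(S) = 3\beta(S) - \chi(S)$.
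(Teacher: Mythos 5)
Your overall two-sided strategy is the right one and is what the paper intends: the lower bound $3\beta(S)-\chi(S)\leq b(S)$ comes from the earlier corollary (via $\mu(S)\leq\beta(S)\leq\min\{c_i\}$ together with $\chi(S)=c_1+c_2+c_3-b$, with Theorem \ref{thm:twistspunrank} supplying $\mu(S)=\beta(S)=\beta(K)$), and the upper bound comes from exhibiting a balanced $(3\beta-\chi(S);\beta)$-bridge trisection. However, your justification of the upper bound has a genuine gap. The generic procedure you invoke (index-order the critical points, slice above the minima, pass to a banded bridge splitting) only controls $c_1=\beta$; it gives no control over $b$, $c_2$, or $c_3$, and you cannot repair this by appealing to ``sharpness of the meridional rank lower bound'' --- an equality of invariants does not produce a construction, and the $\mathbb{Z}/3$ symmetry you posit for a twist-spin's Morse function is not something the twist-spin construction provides. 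The correct source of the balanced trisection is the explicit construction of Meier and Zupan: starting from a $\beta(K)$-bridge splitting of the classical knot $K$ in $S^3$, the (twist-)spun knot $\tau^n K$ inherits a $(3\beta(K)-2;\beta(K))$-bridge trisection directly. One then connect-sums with $m$ copies of the standard $\R P^2$, each of which has a $(2;1)$-bridge trisection, using the fact that a connected sum of a $(b_1;c_1)$- and a $(b_2;c_2)$-trisection yields a $(b_1+b_2-1;c_1+c_2-1)$-trisection; this gives a $(3\beta(K)-2+m;\beta(K))$-trisection of $S$, and since $\chi(S)=2-m$ this is exactly $b(S)\leq 3\beta(K)-\chi(S)$. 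With that substitution your argument closes correctly; the self-identified ``hard part'' about $c_2$ simply does not arise in the explicit construction, because all three disk systems are built with $\beta(K)$ patches from the outset.
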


These examples reprove the following theorem, due partially to \cite{meier2017bridge} and also \cite{sato2020bridge}.

\begin{thm}[\cite{meier2017bridge},\cite{sato2020bridge}]
For any integer $b\geq 4$, there exist infinitely many surfaces with bridge trisection index equal to $b$.
\end{thm}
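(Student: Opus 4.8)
The plan is to realize every integer $b\geq 4$ as the bridge trisection index of infinitely many surfaces taken from the family treated in the preceding corollary, namely $S=\tau^{2k}K\#^m\R P^2$ with $K$ a BBKM knot, $m\geq 0$, and $k\in\Z$. For such $S$ the corollary gives the exact value $b(S)=3\beta(S)-\chi(S)$, while Theorem \ref{thm:twistspunrank}(1) gives $\beta(S)=\beta(K)$, so the problem reduces to an Euler-characteristic count. Since $\tau^{2k}K$ is a $2$-sphere and each $\R P^2$ summand lowers $\chi$ by one (as $\chi(F_1\#F_2)=\chi(F_1)+\chi(F_2)-2$ with $\chi(\R P^2)=1$), we obtain $\chi(S)=2-m$ and hence
\[
b(S)=3\beta(K)+m-2.
\]

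First I would fix, for each target $b\geq 4$, the values $\beta(K)=2$ and $m=b-4\geq 0$. The torus knots $T(2,2p-1)$ with $p\geq 2$ are $2$-bridge twisted knots, hence BBKM knots of bridge number $2$ (they are the single-twist-region members of $\mathcal{K}_{p,1}$), so each satisfies the hypotheses of Theorem \ref{thm:twistspunrank}(1). With these choices the displayed formula yields $b(S)=6+(b-4)-2=b$ exactly, producing for every $b\geq 4$ a surface of bridge trisection index $b$. To pass from one such surface to infinitely many, I would keep $m=b-4$ fixed and let the pair $(k,p)$ vary, so that every member $\tau^{2k}T(2,2p-1)\#^m\R P^2$ carries the same index $b$ by the computation above; the entire content then lies in showing that infinitely many of these surfaces are pairwise non-isotopic.

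The hard part will be precisely this distinctness, and here I expect the main obstacle. For the base case $m=0$ (that is, $b=4$) the surface is the honest $2$-knot $\tau^{2k}T(2,2p-1)$, and I would invoke Zeeman's theorem that for $m\neq 0$ the complement of $\tau^mK$ fibers over $S^1$ with fiber the $m$-fold cyclic branched cover of $K$; since $H_1$ of the fiber $\Sigma_{2k}(T(2,2p-1))$ is computed from the Alexander polynomial and grows without bound in $p$, these $2$-knots are pairwise distinguished. For $m\geq 1$ the connected sum with crosscaps destroys the ambient fibration, so I cannot use the fiber invariant directly; instead I would argue at the level of the knot group, writing $\pi_1(S^4\setminus S)$ as the amalgam of $\pi_1(S^4\setminus\tau^{2k}T(2,2p-1))$ with the fixed group coming from the $m$ standard $\R P^2$ summands (amalgamated over the meridional $\Z$). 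The $2$-knot factor varies with $(k,p)$ while the crosscap part is fixed, and I would use uniqueness of such amalgamated decompositions to recover the $2$-knot group, and thereby the branched-cover homology, as an invariant of $S$. This reduces the $m\geq 1$ case to the already-established distinctness for $m=0$.

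If the group-theoretic recovery proves delicate, a fallback is to absorb all infinitude into the twist index: fixing $K=T(2,3)$ and $m=b-4$, the knots $\tau^{2k}K$ have fiber the $2k$-fold branched cover of the trefoil, whose first homology distinguishes infinitely many values of $k$, and one then needs only the weaker claim that this persists through the fixed crosscap summand. Either way, the numerical realization is immediate from $b(S)=3\beta(K)+m-2$, and the only genuine work is verifying that the chosen infinite subfamily consists of pairwise non-isotopic surfaces.
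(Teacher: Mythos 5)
Your numerical realization is correct and in fact more uniform than the paper's. You fix $\beta(K)=2$ and let the number $m=b-4$ of crosscaps absorb the congruence class of $b$, so that $b(S)=3\beta(K)+m-2=b$ follows in one line from the corollary $b(S)=3\beta(S)-\chi(S)$ and Theorem \ref{thm:twistspunrank}(1). The paper instead splits into cases: $b\equiv 1\pmod 3$ via spun knots $\tau^0K$, $b\equiv 0\pmod 3$ via spun tori or sums with a trivial torus, $b\equiv 2\pmod 6$ via $\tau^0K\#^2T$, and only the residue $b\equiv 5\pmod 6$ (the case credited to Sato--Tanaka) via $\tau^{2k}K\#\R P^2$. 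Both routes rest on the same two ingredients, so your arithmetic is fine; one small slip is calling $T(2,2p-1)$ a member of $\mathcal{K}_{p,1}$ (that family is only used for odd twist spins and requires $n\geq 2$), but since you invoke part (1), only the fact that $T(2,2p-1)$ is a $2$-bridge BBKM knot matters, and that is correct.

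The genuine gap is the distinctness argument for $m\geq 1$, and your proposed repair will not work as stated. The group of $\tau^{2k}K\#\R P^2$ is not an amalgamated free product in the sense required by decomposition-uniqueness theorems: the group of the standard $\R P^2$ is $\Z/2$ and the meridional $\Z$ maps onto it non-injectively, so the ``amalgam'' is really the quotient $\pi_1(S^4\setminus\tau^{2k}K)/\langle\langle\mu^2\rangle\rangle$, from which there is no a priori way to recover the $2$-knot factor or the homology of its fiber. This is not a hypothetical worry --- the paper itself recalls Viro's example of a knotted sphere $F$ with $F\#R=R$ for $R$ a standard $\R P^2$, so crosscap summands can genuinely destroy knotting --- and your fallback contains the same unproved step (``persists through the fixed crosscap summand''). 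The paper's use of orientable models for $b\not\equiv 5\pmod 6$ quietly sidesteps this, since $\pi_1(S^4\setminus\tau^0K)\cong\pi_1(S^3\setminus K)$ and infinitely many $2$-bridge knots are distinguished by their classical groups. To salvage your uniform construction you need an invariant that survives killing $\mu^2$; the natural candidate, in the spirit of this paper, is the collection of dihedral (rank-$2$ Coxeter) quotients guaranteed by Theorem \ref{thm:twistspunrank}(1), e.g.\ showing that $\tau^{2k}T(2,q)\#^m\R P^2$ admits a meridian-to-reflection surjection onto $D_q$ but not onto $D_{q'}$ for distinct odd primes $q\neq q'$. Without something of this kind, the ``infinitely many'' clause is established only for $b=4$.
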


\begin{proof}
The cases $b\equiv$ 0 or 1 mod 3 are covered by the techniques of \cite{meier2017bridge}: it is pointed out in that paper that if $K$ is a knot in $S^3$ with bridge number $\beta=\mu(K)$, then $\tau^0 K$ admits a $(3\beta-2 ;\beta)$-bridge trisection. Since the meridional rank of a spun knot $\tau^0 K$ is equal to the meridional rank of $K$, and since the bridge trisections constructed in \cite{meier2017bridge} have parameters $(3\beta-2;\beta)$, we get that $b(\tau^ 0 K)=3\beta-2$ whenever $\mu(K)=\beta(K)$. For the 0 mod 3 case, simply spin the entire circle to obtain the spun-torus of $K$, which still has the same group and meridians of $K$, so has the same meridional rank (and the corresponding bridge trisection has parameters $(3\beta ;\beta)$). Alternatively, one could form the connected sum $\tau^0 K \# T$, where $T$ is a trivial torus, since this does not change the group. The 2 mod 3 case can still be handled by orientable surfaces when $\chi$ is even, e.g.\ $b(\tau^0 K\#^2 T)=8$, but when $b\equiv$ 5 mod 6 we must connect sum with a standard $\R P^2$ instead, as in \cite{sato2020bridge}. In this case we can apply Theorem \ref{thm:twistspunrank} to $\tau^{2m} K\# \R P^2$ for any BBKM knot $K$ to prove the theorem.
\end{proof}

We remark that Theorem \ref{thm:twistspunrank}.2 provides many examples of odd-twist spun knots for which we establish $\beta$ and $b$. The case of odd-twist spun knots was left open by \cite{sato2020bridge}, and is not attainable by Coxeter quotients or by considering Fox colorings.

\subsection{The Tube map and bridge trisections}

An advantage to the computations of the bridge number and meridional rank of virtual knot diagrams is that we can automatically get the bridge trisection index of its Tube, using the construction of \textit{ribbon bridge trisections}, formulated in \cite{JMMZ}. In that work, ribbon bridge trisections were defined and utilized by the first author and Meier, Miller, and Zupan to provide the first examples of nonisotopic bridge trisections of isotopic knotted surfaces \cite{JMMZ}. The following proposition gives a way to compute an upper bound for the bridge trisection number of a ribbon $T^2$-knot.

\begin{prop}[\cite{JMMZ}]
Let $S$ be a ribbon $T^2$-knot. Suppose that $S$ can be represented by a virtual knot diagram $D$ such that $\beta_{\mathbb{R}^2}(D) = r,$ then $b(S) \leq 3r$.\label{weldedtotrisection}
\end{prop}
\begin{proof}
The equatorial cross-section $K$ of $Tube(D)$ has a diagram with $2r$ minima with respect to $h$ as shown on the right of Figure \ref{fig:virtualbraid} (without the colored bands). The link $K$ together with these bands gives a movie description of ribbon annuli, whose double is $Tube(D)$. A banded unlink diagram of $Tube(D)$ can then be obtained by performing a band move and attaching a dual band (shown in fuchsia) near each yellow band as depicted in the middle of Figure \ref{fig:perturbing}. To obtain a dual bridge disk for each yellow band, we can perturb the banded unlink as shown on the right of Figure \ref{fig:perturbing}. The process described above produces a banded bridge splitting, and thus a bridge trisection, with $3r$ bridges.
\end{proof}

The above proposition, together with Theorem \ref{thm:virtualbbkm}, yield the following corollary.


\bridgenumberribbon*

\begin{figure}[ht!]

    \centering
    \includegraphics[width=.4\textwidth]{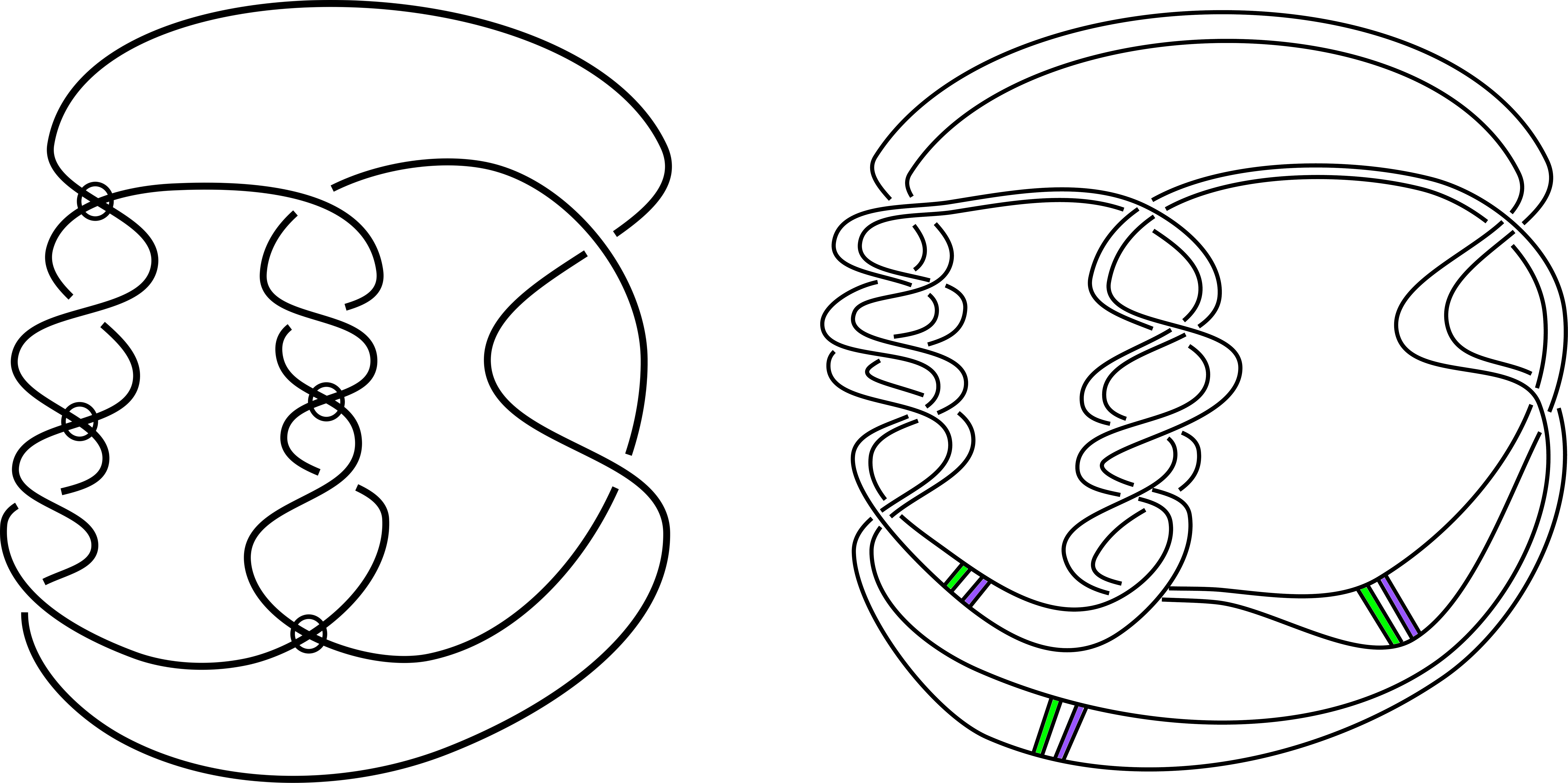}
    \caption{From a virtual knot diagram $D$ with $r$ minima (left), one can get a banded link diagram for $Tube(D)$ with $2r$ minima (right).}
    \label{fig:virtualbraid}
\end{figure}
\begin{figure}[ht!]
    \centering
    \includegraphics[width=.7\textwidth]{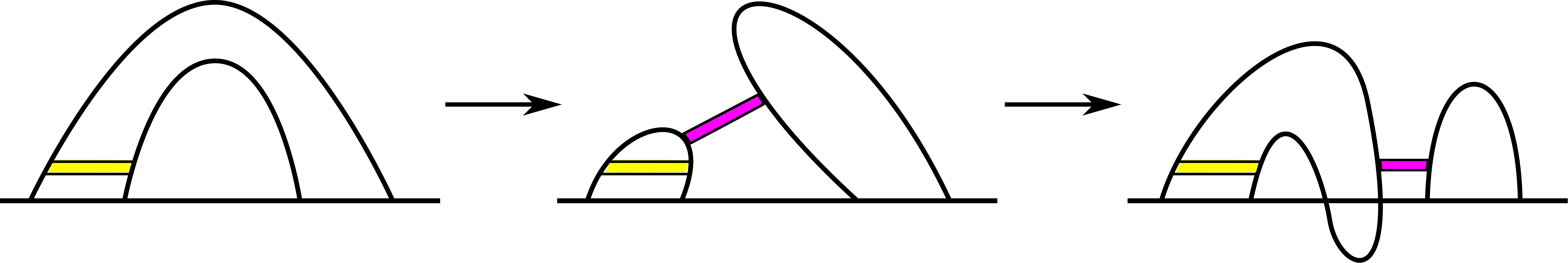}
    \caption{To get a bridge trisection, we perform a perturbation to get the rightmost image. This increases the bridge index by 1 for each of the original minima of $D$, resulting in a bridge trisection index of $3r$.}
    \label{fig:perturbing}
\end{figure}


\bibliographystyle{plainurl}
\bibliography{ref}

\end{document}